\documentclass[a4paper,11pt]{article}

\usepackage[affil-it]{authblk}

\usepackage{indentfirst}
\usepackage[margin = 1 in]{geometry}

\usepackage[numbers,sort&compress]{natbib}
\usepackage{amssymb}
\usepackage{amsmath}
\usepackage{stmaryrd}
\usepackage{empheq}
\usepackage{amsthm}
\usepackage{mathrsfs}
\usepackage{euscript}
\usepackage{soul}
\usepackage{xcolor}
\usepackage[hypertexnames=false,colorlinks,linkcolor={blue},citecolor={blue}]{hyperref}
\usepackage{graphicx}
\usepackage{float}
\usepackage{caption}
\usepackage{epsfig}
\usepackage{subcaption}
\usepackage{scalerel} % used for decreasing the size of \bigcap and \bigcup

\usepackage{enumitem}
\setlist{
  leftmargin=*, 
  topsep = 4pt, 
  itemsep = 1pt, 
}

\usepackage[T1]{fontenc}
\usepackage{lmodern}
\usepackage{microtype}
\usepackage[english]{babel}

\usepackage{fancyhdr}
\fancypagestyle{draftfirstpage}{
  
  \fancyhf{}
}
\numberwithin{equation}{section}

\newtheorem{theorem}{Theorem}[section]
\newtheorem{corollary}[theorem]{Corollary}
\newtheorem{lemma}[theorem]{Lemma}
\newtheorem{proposition}[theorem]{Proposition}
\theoremstyle{definition}
\newtheorem{definition}[theorem]{Definition}

\newtheorem{remark}[theorem]{Remark}
\newtheorem{example}[theorem]{Example}

\allowdisplaybreaks

\newcommand{\eg}{{\itshape e.g., }}
\newcommand{\eqfinp}{\,.}
\newcommand{\eqfinv}{\,,}
\newcommand{\eqsepv}{\,,}

\newcommand{\eqsepand}{\quad \text{and} \quad}

\usepackage{macros}

\usepackage{array} 

\makeatletter
\let\original@l@section\l@section
\renewcommand*{\l@section}[2]{{\let\addvspace\@gobble\original@l@section{#1}{#2}}}
\makeatother

\newcommand{\rev}[1]{\ifmmode\expandafter\mathcolor\else\expandafter\textcolor\fi{blue}{#1}}

\DeclareMathOperator{\Eff}{Eff}
\DeclareMathOperator{\wEff}{wEff}
\DeclareMathOperator{\IEff}{IEff}
\DeclareMathOperator{\ENE}{ENE}
\DeclareMathOperator{\wENE}{wENE}
\DeclareMathOperator{\INE}{INE}
\DeclareMathOperator{\GNE}{GNE}
\title{Stability of ideal Nash equilibrium points in generalized multi-criteria games, interval-valued games, and existence in single-leader--multi-follower problems}

\author[1]{Urairat Deepan}
\author[1,2]{Poom Kumam}
\author[1,2]{Parin Chaipunya\thanks{Corresponding author}}

\affil[1]{
  {Department of Mathematics, Faculty of Science, King Mongkut's University of Technology Thonburi},
  {126 Pracha Uthit Rd.},
  {Bang Mod, Thung Khru, Bangkok},
  {10140}, 
  {Thailand.}
}
\affil[2]{
  {Center of Excellence in Theoretical and Computational Science (TaCS-CoE), King Mongkut's University of Technology Thonburi},
  {126 Pracha Uthit Rd.},
  {Bang Mod, Thung Khru, Bangkok},
  {10140}, 
  {Thailand.}
}

\affil[ ]{
  Emails: urairat.deepan@mail.kmutt.ac.th, poom.kum@kmutt.ac.th, parin.cha@kmutt.ac.th
  \vspace{.25cm}
}

\date{}

\begin{document}
\maketitle 
\thispagestyle{draftfirstpage}

\vspace{-3em}

\begin{abstract}

  We study the stability of ideal Nash equilibria in generalized multi-criteria games, in which each player's vector-valued payoff is ordered by a closed cone and the feasible strategy sets depend on the other players' strategies.
  Under compactness and continuity assumptions, the ideal Nash equilibrium correspondence is upper semicontinuous on the class of games admitting an ideal equilibrium.
  Our main result does not require the ordering cones to be convex or pointed.
  In fact, the ideal equilibria are exactly the common equilibria of associated scalar games when all the cones are convex.
  As consequences of our results, the stability of interval-valued games is derived and existence theorems for single-leader-multi-follower problems are proved for both optimistic and pessimistic sense.

  \noindent\textbf{Keywords:} Ideal Nash equilibrium; Generalized multi-criteria game; Vector optimization; Upper semicontinuity; Interval order; Single-leader--multi-follower problem; Pessimistic bilevel problem.

  \noindent\textbf{MSC 2020:} 91A10; 91A65; 90C29; 49J53; 54C60.
\end{abstract}

{
  \footnotesize
  \tableofcontents
}

%================================================================================

\section{Introduction}

% Bilevel optimization models hierarchical decision-making through an upper-level optimization problem constrained by a lower-level problem \cite{orig6,orig8,orig13,orig5}.
% The classical Stackelberg formulation captures sequential decisions by a leader and a follower \cite{orig25,orig26}.
% In a single-leader--single-follower model, the leader first selects a decision and anticipates the follower's response; the resulting response restriction makes the leader's feasible region dependent on the lower-level solution set~\cite{orig3,orig7,orig21,Dempe2002,orig28}.
%
% With multiple followers, one must additionally specify their mode of interaction.
% In a noncooperative formulation, the followers may be modeled as players in a Nash game, each responding to the leader's decision and to the other followers' strategies.
% Cooperative and sequential variants lead to different hierarchical models and solution concepts; see the discussions in \cite{orig19,orig16,orig23,orig24,orig12,orig15,orig25}.
% When the followers' game has multiple equilibria, an \emph{optimistic} leader chooses a feasible leader--follower pair minimizing its objective, whereas a pessimistic formulation evaluates an adverse follower response~\cite{orig4,LoridanMorgan1989}.
% Computational reformulations of bilevel and equilibrium-constrained problems may be nonconvex or otherwise difficult; the relevant complexity depends on the chosen problem class~\cite{orig4,orig15,orig17}.

In multi-criteria games, a player's objective has several components, so payoffs are compared through a cone-induced order rather than a single scalar ranking \cite{Luc1989}.
Games with vector payoffs go back to~\citet{Blackwell1956} and~\citet{ShapleyRigby1959}, and solution concepts and the existence of Pareto and weakly Pareto equilibria have since been studied extensively~\cite{Corley1985,GhosePrasad1989,Zhao1991,Wang1993,YuYuan1998}.
See also~\citet{Bade2005} for Nash equilibria under incomplete preferences and~\citet{CrespiKuroiwaRocca2025} for a recent characterization of equilibria in vector-valued matrix games.
The vector components are usually ranked through the concepts of efficient, weakly efficient, and ideal efficient points, and they represent different levels of optimality.
These concepts are then used to define the equilibrium points of a multi-criteria game.
While a (weakly) efficient point requires that no feasible point (weakly) dominates it, an ideal efficient point demands that it must dominate \emph{every} feasible alternatives.

More precisely, let~$N=\{1,\ldots,\ell\}$ be the set of players, let~$X=\prod_{\nu\in N}X_{\nu}$, and let~$K_{\nu}:X_{-\nu} \multimap X_{\nu}$ denote player~$\nu$'s constraint map.
With~$u_{\nu}:X\to\R^{d_{\nu}}$ denoting its vector-valued payoff and~$C_{\nu}\subset\R^{d_{\nu}}$ its ordering cone, we employ the \emph{maximization} convention
\begin{align*}
  a\preceq_{C_{\nu}}b\quad\Longleftrightarrow\quad b-a\in C_{\nu}\eqfinp
\end{align*}
An \emph{ideal Nash equilibrium} is a feasible profile~$x^{\ast}\in X$ satisfying, for every~$\nu\in N$,
\begin{align}\label{eq:intro-ine}
  x_{\nu}^{\ast}\in K_{\nu}(x_{-\nu}^{\ast})\eqsepand
  u_{\nu}(w_{\nu},x_{-\nu}^{\ast})\preceq_{C_{\nu}}u_{\nu}(x^{\ast})
  \quad\text{for every }w_{\nu}\in K_{\nu}(x_{-\nu}^{\ast})\eqfinp
\end{align}
Denote this game by~$\Gamma$, then we write~$\INE(\Gamma)$ for the set of ideal equilibria of the corresponding generalized game~$\Gamma$.
The precise description of this problem is given in Definitions~\ref{def:efficiency} and~\ref{def:game}.
For closed convex cones, ideal equilibria coincide with the common equilibria of the scalar games obtained by weighting the payoffs with dual-cone vectors, so part of our stability analysis can also be derived from the scalar counterpart (Proposition~\ref{prop:scalar-reduction}).
Our direct arguments, however, also cover closed nonconvex cones for which no such reduction exists.

The central stability question concerns how the set of ideal Nash equilibria responds to changes in the players' payoff functions and constraint maps.
For scalar games, stability in the sense of essential equilibria and essential components was studied by~\citet{Yu1999,YuLuo1999}, and these ideas were extended to weakly Pareto--Nash equilibria of multiobjective generalized games by~\citet{Yang2003,Lin2005,SongWang2010}.
Set-valued analysis provides closed-graph and semicontinuity tools suited to this purpose \cite{AubinFrankowska2009,RockafellarWets1998}. 
The approach of this paper is also motivated by the generic-continuity viewpoint is related to the essential-equilibrium approach of~\citet{YuYu2006}.

In this study, we formulate the parameter space with uniform payoff convergence and Hausdorff convergence of constraint sets.
Our stability theorem applies to arbitrary closed ordering cones, which need not be convex or pointed, provided ideal equilibria exist in the parameter class under consideration.
In fact, we pointed out explicitly that whehn the cone is convex, we can derive the stability result directly from the known literature of scalar generalized Nash equilibrium problems.
After that we continue deriving by another method the stability in the case that the cone is not necessarily convex.
Because the class of games admitting ideal equilibria is a complete metric space, Fort's theorem \cite{Fort1951} further yields continuity of the equilibrium correspondence on a dense residual subset.
The lower--upper (LU) ordering of interval-valued objectives is also recalled as a special instance of the~$\R_{+}^{2}$ cone order \cite{Wu2007}.
In Subsection~\ref{sec:interval-valued-games} we use this identification to define ideal Nash equilibria of interval-valued games and to derive their stability from our proposed multi-criteria results.

We then use the equilibrium stability result in a single-leader--multi-follower problem.
The leader chooses a decision~$z\in Z_{0}$, which determines the followers' game~$\lambda(z)$, while the followers select an ideal Nash equilibrium~$x\in S(\lambda(z))$.
The existence result uses compactness of the joint feasible set and lower semicontinuity of the leader's objective, in line with the classical Weierstrass principle and optimistic bilevel optimization \cite{orig3,Dempe2002,LoridanMorgan1989}.
For the pessimistic formulation, upper semicontinuity of the equilibrium correspondence shows that the pessimistic value is attained by some leader decision and follower equilibrium.
This decision solves the pessimistic problem when the correspondence is also lower semicontinuous at the limiting parameter, and an example shows that this extra condition cannot be dropped.

The paper is organized as follows.
Section~\ref{sec:preliminaries} introduces the cone order, ideal Nash equilibria, and relevant notions from set-valued analysis.
Section~\ref{sec:stability} proves equilibrium stability and generic continuity and relates the results to the scalar theory when the cones are convex.
Subsection~\ref{sec:interval-valued-games} recalls LU ordering for interval-valued objectives and specializes the stability results to interval-valued games.
Section~\ref{sec:single-leader-multi-follower} applies the stability result to the existence of solutions of the optimistic and pessimistic problems.
Section~\ref{sec:conclusions} concludes.

\section{Preliminaries}\label{sec:preliminaries}

We recall the main notions from vector optimization and set-valued analysis used throughout the paper. One may refer to \cite{Luc1989,AubinFrankowska2009,RockafellarWets1998} for more information.

Throughout the paper, all metric spaces are equipped with their given metric, and all products of finitely many metric spaces carry the product topology.
The Euclidean spaces are equipped with the respected Euclidean norms.
Our notation~$A \subset B$ includes the possibility that~$A = B$.

\subsection*{Ordering cones and generalized multi-criteria games}

Let~$C\subset\R^{d}$ be a cone, that is,~$tC\subset C$ for every~$t\geq 0$.
The cone~$C$ is \emph{convex} if~$C+C\subset C$, \emph{pointed} if~$C\cap(-C)=\{0\}$, and \emph{solid} if~$\interior C\neq\emptyset$.
The order induced by~$C$ is defined by
\begin{align}\label{eq:cone-order}
  a\preceq_{C} b \quad\Longleftrightarrow\quad b-a\in C\eqfinp
\end{align}
If~$\interior C\neq\emptyset$, the notation~$a\prec_{C} b$ means~$b-a\in\interior C$.

\begin{definition}[\cite{Luc1989}]
  \label{def:efficiency}
  Following the standard cone-order approach to vector optimization \cite{Luc1989}, let~$K$ be a nonempty set and let~$v:K\to\R^{d}$ be a vector-valued objective function.
  Under the \emph{maximization} convention, a strategy~$x^{\ast}\in K$ is called:
  \begin{itemize}
    \item a \emph{$C$-efficient point over~$K$} if there is no~$x\in K$ such that~$v(x)-v(x^{\ast})\in C\setminus\{0\}$;
    \item a \emph{weakly~$C$-efficient point over~$K$} if there is no~$x\in K$ such that~$v(x)-v(x^{\ast})\in\interior C$, where we assume that~$\interior C\neq\emptyset$;
    \item a \emph{ideal~$C$-efficient point over~$K$} if~$v(x)\preceq_{C} v(x^{\ast})$ for every~$x\in K$.
  \end{itemize}
  Denoted by~$\Eff_{C}(v;K)$,~$\wEff_{C}(v;K)$, and~$\IEff_{C}(v;K)$, respectively, the sets of all~$C$-efficient points, of all weakly~$C$-efficient points, and of all ideal~$C$-efficient points, over~$K$.
\end{definition}

If~$C$ is pointed, then~$\IEff_{C}(v;K)\subset\Eff_{C}(v;K)$, and~$\IEff_{C}(v;K)=\Eff_{C}(v;K)$ whenever~$\IEff_{C}(v;K)\neq\emptyset$.
Pointedness cannot be dropped here, since an ideal point~$x^{\ast}$ is not efficient as soon as~$v(x)-v(x^{\ast})\in C\cap(-C)\setminus\{0\}$ for some~$x\in K$.
If~$C$ is solid and~$C\neq\R^{d}$, then we also have~$\Eff_{C}(v;K)\subset\wEff_{C}(v;K)$.

Next, we describe formally the generalized multi-criteria game which is the central concept of our present study.
Let~$N=\{1,\ldots,\ell\}$ be a finite set of players.
For each~$\nu\in N$, let~$X_{\nu}$ be a metric space and let~$C_{\nu}\subset\R^{d_{\nu}}$ be an ordering cone.
We use the notation
\begin{align*}
  X:=\prod_{\nu\in N}X_{\nu}\eqsepv\qquad
  X_{-\nu}:=\prod_{\mu\in N\setminus\{\nu\}}X_{\mu}\eqsepv\qquad
  x=(x_{\nu},x_{-\nu})\eqfinp
\end{align*}

\begin{definition}\label{def:game}
  A generalized multi-criteria noncooperative game is specified by
  \begin{align*}
    \Gamma:=\bigl(X_{\nu},K_{\nu},u_{\nu},C_{\nu}\bigr)_{\nu\in N}\eqfinv
  \end{align*}
  where, for each player~$\nu\in N$,
  \begin{align*}
    K_{\nu}:X_{-\nu}\multimap X_{\nu}
    \eqsepand
    u_{\nu}:X\to\R^{d_{\nu}}
  \end{align*}
  are the constraint map and vector-valued payoff function, respectively.
  A profile~$x^{\ast}\in X$ is \emph{feasible} if~$x_{\nu}^{\ast}\in K_{\nu}(x_{-\nu}^{\ast})$ for all~$\nu\in N$.
  A feasible profile~$x^{\ast}$ is called
  \begin{itemize}
    \item an \emph{efficient Nash equilibrium} (ENE) of~$\Gamma$ if~$x_{\nu}^{\ast}\in \Eff_{C_{\nu}}
      \bigl(u_{\nu}(\cdot,x_{-\nu}^{\ast});K_{\nu}(x_{-\nu}^{\ast})\bigr)$
      for every~$\nu \in N$;
    \item a \emph{weakly efficient Nash equilibrium} (wENE) of~$\Gamma$ if~$x_{\nu}^{\ast}\in \wEff_{C_{\nu}}
      \bigl(u_{\nu}(\cdot,x_{-\nu}^{\ast});K_{\nu}(x_{-\nu}^{\ast})\bigr)$ for every $\nu\in N$; 
    \item an \emph{ideal Nash equilibrium} (INE) of~$\Gamma$ if~$x_{\nu}^{\ast}\in \IEff_{C_{\nu}}
      \bigl(u_{\nu}(\cdot,x_{-\nu}^{\ast});K_{\nu}(x_{-\nu}^{\ast})\bigr)$ for every $\nu\in N$.
  \end{itemize}
  The notion of a wENE is used only when every~$C_{\nu}$ are solid. 
  These equilibrium sets are denoted respectively by~$\ENE(\Gamma)$,~$\wENE(\Gamma)$, and~$\INE(\Gamma)$.
\end{definition}

\subsection*{Set-valued analysis and set convergence}

The notions and results in this section are standard within the literature (see \eg \cite{AubinFrankowska2009,RockafellarWets1998}).
For a metric space~$(X,d)$, let~$\mathcal{K}(X)$ denote the family of all nonempty compact subsets of~$X$.
For~$x\in X$ and nonempty~$A\subset X$, set~$d(x,A):=\inf_{a\in A}d(x,a)$.
The Hausdorff distance between~$A,B\in\mathcal{K}(X)$ is defined by
\begin{align}\label{eq:hausdorff}
  h(A,B):=\max\left\{
    \sup_{a\in A}d(a,B),\ \sup_{b\in B}d(b,A)
  \right\}\eqfinp
\end{align}
It is known that~$(\mathcal{K}(X), h)$ is a metric space, and it is complete if~$X$ is.

\begin{definition}
  \label{def:set-valued-continuity}
  Let~$X$ and~$Y$ be topological spaces and let~$\Phi:Y\multimap X$ have nonempty compact values.
  We say that~$\Phi$ is
  \begin{itemize}
    \item \emph{upper semicontinuous} (usc) at~$y\in Y$ if, for every open set~$O\subset X$ containing~$\Phi(y)$, there exists a neighborhood~$V$ of~$y$ such that~$\Phi(y')\subset O$ for all~$y'\in V$;
    \item \emph{lower semicontinuous} (lsc) at~$y\in Y$ if, for every open set~$O\subset X$ with~$\Phi(y)\cap O\neq\emptyset$, there exists a neighborhood~$V$ of~$y$ such that~$\Phi(y')\cap O\neq\emptyset$ for all~$y'\in V$;
    \item \emph{continuous} at~$y$ if it is both usc and lsc there.
  \end{itemize}
\end{definition}

The following criteria are stated with a compact codomain so that the closed-graph characterization of upper semicontinuity is valid.

\begin{proposition}
  \label{prop:sequential-set-valued}
  Let~$X$ be a \emph{compact} metric space,~$Y$ a metric space, and~$\Phi:Y\to\mathcal{K}(X)$.
  For any~$y\in Y$, the following hold:
  \begin{enumerate}[label=\upshape(\alph*)]
    \item\label{it:seq-usc} $\Phi$ is usc at~$y$ if and only if for every sequence~$y_{m}\to y$ and every sequence~$x_{m}\in \Phi(y_{m})$ satisfying~$x_{m}\to x$, one has~$x\in \Phi(y)$;
    \item\label{it:seq-lsc} $\Phi$ is lsc at~$y$ if and only if for every sequence~$y_{m}\to y$ and every~$x\in \Phi(y)$, there exist~$x_{m}\in \Phi(y_{m})$ with~$x_{m}\to x$;
    \item\label{it:seq-cont} $\Phi$ is continuous at~$y$ if and only if~$h(\Phi(y_{m}),\Phi(y))\to0$ for every sequence~$y_{m}\to y$.
  \end{enumerate}
  In particular, for compact~$X$, a compact-valued correspondence has a closed graph if and only if it is upper semicontinuous.
\end{proposition}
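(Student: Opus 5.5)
We prove the three items in order. Throughout we use that in a metric space sequential and topological notions agree, and that $X$ is compact.

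\emph{Item~\ref{it:seq-usc}.} For the forward direction, suppose $\Phi$ is usc at $y$, and let $y_m\to y$, $x_m\in\Phi(y_m)$ with $x_m\to x$. If $x\notin\Phi(y)$, then compactness of $\Phi(y)$ gives $\varepsilon:=d(x,\Phi(y))>0$. Put $O:=\{z: d(z,\Phi(y))<\varepsilon/2\}$, an open set containing $\Phi(y)$. By usc, $x_m\in O$ for all large $m$, so $d(x_m,\Phi(y))<\varepsilon/2$. Letting $m\to\infty$ and using continuity of $d(\cdot,\Phi(y))$ yields $d(x,\Phi(y))\le\varepsilon/2$, a contradiction.

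For the converse, suppose $\Phi$ is not usc at $y$. Then there are an open $O\supset\Phi(y)$, a sequence $y_m\to y$, and points $x_m\in\Phi(y_m)\setminus O$. Since $X$ is compact, a subsequence converges to some $x$. The set $X\setminus O$ is closed, so $x\notin O$ and hence $x\notin\Phi(y)$. Passing to the subsequence of $(y_m)$ violates the sequential condition. This is the only place compactness of $X$ is essential.

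\emph{Item~\ref{it:seq-lsc}.} For the forward direction, assume $\Phi$ is lsc at $y$, and fix $y_m\to y$ and $x\in\Phi(y)$. For each $k$, lsc applied to the ball $B(x,1/k)$ gives $M_k$ such that $\Phi(y_m)\cap B(x,1/k)\neq\emptyset$ for all $m\ge M_k$; we may take $M_k$ strictly increasing. For $M_k\le m<M_{k+1}$, pick $x_m\in\Phi(y_m)\cap B(x,1/k)$. For $m<M_1$, pick any $x_m\in\Phi(y_m)$, which is possible since the values are nonempty. Then $x_m\to x$.

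For the converse, suppose lsc fails. Then there are an open $O$ with some $x\in\Phi(y)\cap O$ and a sequence $y_m\to y$ with $\Phi(y_m)\cap O=\emptyset$. Any selection $x_m\in\Phi(y_m)$ stays outside the open set $O$, so it cannot converge to $x$. This contradicts the sequential condition.

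\emph{Item~\ref{it:seq-cont}.} We show that the Hausdorff condition is equivalent to the conjunction of the sequential conditions in \ref{it:seq-usc} and \ref{it:seq-lsc}. Write $e(A,B):=\sup_{a\in A}d(a,B)$, so that $h(A,B)=\max\{e(A,B),e(B,A)\}$.

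First, $e(\Phi(y),\Phi(y_m))\to0$ is equivalent to the condition in \ref{it:seq-lsc}. If the excess tends to zero, choose $x_m\in\Phi(y_m)$ nearest to $x$, which exists by compactness; then $d(x_m,x)\le e(\Phi(y),\Phi(y_m))\to0$. Conversely, suppose the excess does not tend to zero. Then along a subsequence there are points $a_m\in\Phi(y)$ with $d(a_m,\Phi(y_m))\ge\varepsilon$, and $a_m\to a\in\Phi(y)$ by compactness of $\Phi(y)$. Any selection $x_m\in\Phi(y_m)$ then satisfies $d(x_m,a)\ge\varepsilon/2$ eventually, so it cannot converge to $a$. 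This contradicts \ref{it:seq-lsc}, applied to that subsequence of $(y_m)$ together with its complement.

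Second, $e(\Phi(y_m),\Phi(y))\to0$ is equivalent to usc. Usc applied to the $\varepsilon$-enlargement of $\Phi(y)$ gives the excess bound directly. Conversely, if the excess tends to zero, then any convergent $x_m\in\Phi(y_m)$ satisfies $d(x_m,\Phi(y))\to0$, so its limit lies in the closed set $\Phi(y)$. This is the sequential criterion of \ref{it:seq-usc}.

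\emph{Closed graph.} The final claim follows by applying \ref{it:seq-usc} at every point $y\in Y$: the sequential condition holding at all points is exactly closedness of the graph in $Y\times X$, where sequences suffice because the space is metric.

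The main care point is the subsequence bookkeeping in \ref{it:seq-cont} and the diagonal choice in \ref{it:seq-lsc}. Both are routine.
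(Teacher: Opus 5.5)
Your proof is correct. The paper gives no proof of this proposition; it treats it as standard and points to \cite{AubinFrankowska2009,RockafellarWets1998}. Your argument is the usual textbook one, so there is no competing route to compare.

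What your self-contained version adds is that it shows exactly where compactness of $X$ enters. It is needed in ``sequential condition $\Rightarrow$ usc'' in (a), and hence in ``closed graph $\Rightarrow$ usc''. This is the direction the paper relies on in Lemma~\ref{lem:feasibility} and Theorems~\ref{thm:convex-stability} and~\ref{thm:equilibrium-stability}, and it is why the paper insists on a compact codomain. In (c), compactness of $X$ enters only because you recover usc from $e(\Phi(y_m),\Phi(y))\to0$ by routing through (a). That step could instead use compactness of $\Phi(y)$ alone: an open set containing a nonempty compact set contains one of its $\varepsilon$-enlargements. So (b) and (c) in fact hold for arbitrary metric $X$.

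One cosmetic point: in the first half of (c), the phrase ``together with its complement'' is unnecessary. The condition in (b) quantifies over all sequences converging to $y$, so you can apply it directly to the subsequence along which the excess stays bounded away from zero.
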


% [rev] Removed: Definition (Painleve--Kuratowski limits) and the
% proposition comparing them with Hausdorff convergence; neither was used.

\section{Stability of ideal Nash equilibria}\label{sec:stability}

We investigate the behavior of the ideal Nash equilibrium set of a parameterized generalized multi-criteria game under perturbations of its vector-valued payoff functions and constraint maps.
The equilibrium and cone-order conventions used here are those of Definitions~\ref{def:efficiency} and~\ref{def:game}
In particular, recall again that~$x\in\INE(\Gamma)$ if and only if~$x$ is feasible and~$u_{\nu}(x)-u_{\nu}(w_{\nu},x_{-\nu})\in C_{\nu}$ for all~$\nu\in N$ and~$w_{\nu}\in K_{\nu}(x_{-\nu})$.

This section is splitted into three subsections.
Starting with~\S\ref{subsec:parameters}, we introduce the definition and metric structure of our parameter space~$\mathcal{P}$ and prove a general result that the feasible profiles are upper semicontinuous.
Following this, the main stability result of ideal Nash equilibrium set is then discussed.
In~\S\ref{subsec:scalarization}, we point out that when the ordering cones are all convex, then the multi-criteria game reduces to a system of scalar games.
The common solution of these scalar games can then be used to reconstruct fully the ideal Nash equilibrium set of the original multi-criteria game.
Moreover, the stability of these scalar games imply the stability of the multi-criteria game.
We also present in this subsection~\S\ref{subsec:scalarization} an application to interval-valued games.
Finally, we present in~\S\ref{subsec:general-stability} the stability result in the general case where the cones are only assumed to be closed.

\subsection{The parameter space and feasible profiles}\label{subsec:parameters}

Throughout this section,~$N=\{1,\dots,\ell\}$ is finite and each~$X_{\nu}$ is a nonempty \emph{compact metric space} with metric~$d_{\nu}$.
Set~$X=\prod_{\nu\in N}X_{\nu}$ and retain the product notation~$X_{-\nu}$ from Section~\ref{sec:preliminaries}.
In this section, each ordering cone~$C_{\nu}\subset\R^{d_{\nu}}$ is a cone that need not be convex or pointed.
The notions of Section~\ref{sec:preliminaries} are defined for any cone.
Each result begins by stating what it assumes of the~$C_{\nu}$.
Pointedness of the~$C_{\nu}$ is not used anywhere in this section.
The results of this subsection do not involve the ordering cones.
No convexity of the strategy spaces is needed for the stability results in this section.

Let~$\mathcal{K}(X_{\nu})$ denote the nonempty compact subsets of~$X_{\nu}$, equipped with the Hausdorff distance~$h_{\nu}$ from \eqref{eq:hausdorff}.
Consider the parameter space
\begin{align}\label{eq:parameter-space}
  \mathcal{P}:=\prod_{\nu\in N}
  \Big(\mathscr{C}(X,\R^{d_{\nu}})\times
  \mathscr{C}\big(X_{-\nu},\mathcal{K}(X_{\nu})\big)\Big)\eqfinv
\end{align}
where~$\mathscr{C}(A,B)$ denotes the space of continuous maps from~$A$ to~$B$, with respect to their underlying topologies.
An element~$\lambda\in\mathcal{P}$ is written as~$\lambda=(u_{\nu}^{\lambda},K_{\nu}^{\lambda})_{\nu\in N}$, where~$u_{\nu}^{\lambda}:X\to\R^{d_{\nu}}$ is a vector payoff and~$K_{\nu}^{\lambda}:X_{-\nu}\multimap X_{\nu}$ is a continuous, nonempty compact-valued feasible correspondence.
The parameter~$\lambda$ specifies the game
\begin{align}\label{eq:parameter-game}
  \Gamma(\lambda):=\bigl(X_{\nu},K_{\nu}^{\lambda},
  u_{\nu}^{\lambda},C_{\nu}\bigr)_{\nu\in N}\eqfinp
\end{align}

Next, we define a distance function on~$\mathcal{P}$.
For~$\lambda,\eta\in\mathcal{P}$, define
\begin{align}\label{eq:parameter-metric}
  \begin{split}
    \rho(\lambda,\eta):={}&
    \sum_{\nu\in N}\sup_{x\in X}
    \|u_{\nu}^{\lambda}(x)-u_{\nu}^{\eta}(x)\| +\sum_{\nu\in N}\sup_{y\in X_{-\nu}}
    h_{\nu}\big(K_{\nu}^{\lambda}(y),K_{\nu}^{\eta}(y)\big)\eqfinp
  \end{split}
\end{align}

The following completeness result is an immediate consequence of the completeness of the primal data and the definition of~$\rho$.
Its proof is omitted.
\begin{proposition}
  \label{prop:ambient-complete}
  Under the assumptions above,~$(\mathcal{P},\rho)$ is a complete metric space.
\end{proposition}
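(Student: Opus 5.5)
The plan is to split $\rho$ into its two families of summands and show that $\mathcal{P}$ is a finite product of complete metric spaces carrying the sum metric. A Cauchy sequence in $\mathcal{P}$ is then Cauchy in every factor, and it converges in $\mathcal{P}$ exactly when it converges in every factor. Since $N$ is finite, it suffices to prove two things for each $\nu$. First, $\mathscr{C}(X,\R^{d_{\nu}})$ with the sup metric is complete. Second, $\mathscr{C}(X_{-\nu},\mathcal{K}(X_{\nu}))$ with the metric $\sup_{y}h_{\nu}(\cdot(y),\cdot(y))$ is complete. We also check that $\rho$ is a genuine finite-valued metric. Finiteness holds because $X$ is compact, so each $u_{\nu}$ is bounded, and because $h_{\nu}$ is bounded by $\operatorname{diam}X_{\nu}<\infty$. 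Symmetry and the triangle inequality are inherited termwise. Definiteness follows since $h_{\nu}$ is a metric on $\mathcal{K}(X_{\nu})$.

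For the payoff factor we use the classical fact. $X$ is compact, hence $\R^{d_{\nu}}$-valued continuous maps on $X$ form a Banach space under the sup norm. A uniformly Cauchy sequence converges pointwise by completeness of $\R^{d_{\nu}}$, and the limit is continuous as a uniform limit of continuous maps.

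For the constraint factor we run the same argument with target $(\mathcal{K}(X_{\nu}),h_{\nu})$. By the preliminaries this space is complete because $X_{\nu}$ is compact, hence complete; in fact it is even compact. Let $(K^{m})$ be Cauchy for $\sup_{y}h_{\nu}$. For each $y\in X_{-\nu}$, the sequence $K^{m}(y)$ is Cauchy in $\mathcal{K}(X_{\nu})$ and so converges to some $K(y)\in\mathcal{K}(X_{\nu})$. The standard $\varepsilon/3$ argument then gives two further facts. The convergence is uniform in $y$, obtained by passing to the limit in $h_{\nu}(K^{m}(y),K^{n}(y))<\varepsilon$. And $K$ is continuous as a map into $(\mathcal{K}(X_{\nu}),h_{\nu})$, since it is a uniform limit of continuous maps into a metric space.

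The only point needing care is the interpretation of "continuous" for $K_{\nu}^{\lambda}$. The paper's parameter space takes continuity with respect to the Hausdorff metric topology on $\mathcal{K}(X_{\nu})$. By Proposition~\ref{prop:sequential-set-valued}\ref{it:seq-cont}, this agrees with usc plus lsc for compact-valued maps, so the limit $K$ is also continuous in the set-valued sense. Combining the factors, every $\rho$-Cauchy sequence converges in $\rho$ to an element of $\mathcal{P}$. Hence $(\mathcal{P},\rho)$ is complete. I expect no genuine obstacle beyond this identification of topologies.
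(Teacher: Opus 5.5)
Your proof is correct and takes the same approach as the paper. The paper omits the proof and calls it an immediate consequence of the completeness of the primal data and the form of $\rho$. Your argument makes this explicit: $\rho$ is a sum metric over finitely many factors, and each factor is a space of continuous maps with the sup metric into a complete target (the Euclidean spaces, and $(\mathcal{K}(X_{\nu}),h_{\nu})$).
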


To develop further, we define two set-valued maps~$F : \mathcal{P} \times X \multimap X$ and~$f : \mathcal{P} \multimap X$ by
\begin{align}\label{eq:joint-feasible}
  F(\lambda,x):=\prod_{\nu\in N}K_{\nu}^{\lambda}(x_{-\nu})
  \qquad ((\lambda,x)\in\mathcal{P}\times X)
\end{align}
and 
\begin{align}\label{eq:feasible-profile-set}
  f(\lambda):=\fix\big(F(\lambda,\cdot)\big)
  =\big\{x\in X\mid x_{\nu}\in K_{\nu}^{\lambda}(x_{-\nu})
  \ \text{for all }\nu\in N\big\}\eqfinp
\end{align}
A priori,~$f(\lambda)$ may be empty.
Write
\begin{align}\label{eq:ideal-equilibrium-set}
  S(\lambda):=\INE(\Gamma(\lambda)),\eqsepand
  \Lambda:= \dom S = \{\lambda\in\mathcal{P}\mid S(\lambda)\neq\emptyset\}\eqfinp
\end{align}
In particular,~$S(\lambda)\subset f(\lambda)$ and~$f(\lambda)\neq\emptyset$ for every~$\lambda\in\Lambda$.

\begin{lemma}\label{lem:feasibility}
  The following assertions hold:
  \begin{enumerate}[label=\upshape(\alph*)]
    \item\label{it:feas-closed} the graph~$\gr f$ is closed;
    \item\label{it:feas-compact} for each~$\lambda\in\mathcal{P}$,~$f(\lambda)$ is compact (possibly empty);
    \item\label{it:feas-usc} the restriction~$f:\Lambda\multimap X$ is upper semicontinuous and has nonempty compact values.
  \end{enumerate}
\end{lemma}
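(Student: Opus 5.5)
The plan is to prove \ref{it:feas-closed} by a sequential argument, deduce \ref{it:feas-compact} from it, and obtain \ref{it:feas-usc} from Proposition~\ref{prop:sequential-set-valued}\ref{it:seq-usc}.

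For \ref{it:feas-closed}, take $(\lambda_m,x^m)\in\gr f$ with $\lambda_m\to\lambda$ in $(\mathcal{P},\rho)$ and $x^m\to x$ in $X$. Fix $\nu\in N$; we must show $x_\nu\in K_\nu^{\lambda}(x_{-\nu})$. Since $x^m_\nu\in K_\nu^{\lambda_m}(x^m_{-\nu})$, we have
\begin{align*}
  d_\nu\bigl(x^m_\nu,K_\nu^{\lambda}(x^m_{-\nu})\bigr)
  \le h_\nu\bigl(K_\nu^{\lambda_m}(x^m_{-\nu}),K_\nu^{\lambda}(x^m_{-\nu})\bigr)
  \le \rho(\lambda_m,\lambda)\to 0\eqfinp
\end{align*}
Because $K_\nu^\lambda\in\mathscr{C}(X_{-\nu},\mathcal{K}(X_\nu))$ is continuous for the Hausdorff metric, $h_\nu(K_\nu^\lambda(x^m_{-\nu}),K_\nu^\lambda(x_{-\nu}))\to0$. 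Hence the triangle inequality $d(a,B)\le d(a,A')+h(A',B)$ gives $d_\nu(x^m_\nu,K_\nu^\lambda(x_{-\nu}))\to0$. The map $a\mapsto d_\nu(a,B)$ is $1$-Lipschitz, so passing to the limit gives $d_\nu(x_\nu,K^\lambda_\nu(x_{-\nu}))=0$. The set $K^\lambda_\nu(x_{-\nu})$ is compact, hence closed, so $x_\nu\in K^\lambda_\nu(x_{-\nu})$. Since $\nu$ was arbitrary, $x\in f(\lambda)$. Note that $\mathcal{P}\times X$ is metrizable, so sequential closedness suffices.

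For \ref{it:feas-compact}, fix $\lambda$ and apply \ref{it:feas-closed} with the constant sequence $\lambda_m=\lambda$. This shows that $f(\lambda)$ is closed in $X$. Since $X$ is a finite product of compact metric spaces, it is compact, so $f(\lambda)$ is compact, possibly empty.

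For \ref{it:feas-usc}, on $\Lambda$ we have $S(\lambda)\subset f(\lambda)$, so $f(\lambda)\neq\emptyset$. By \ref{it:feas-compact}, $f$ restricted to $\Lambda$ maps into $\mathcal{K}(X)$. Now apply Proposition~\ref{prop:sequential-set-valued}\ref{it:seq-usc} with the metric space $Y=\Lambda$ (subspace metric $\rho$) and the compact space $X$. The sequential criterion there is exactly the closedness of the graph shown in \ref{it:feas-closed}, restricted to sequences in $\Lambda$ whose limit lies in $\Lambda$.

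The only delicate step is the estimate in \ref{it:feas-closed}: the base point $x^m_{-\nu}$ moves at the same time as the parameter. This is handled by splitting the estimate into the uniform convergence $\rho(\lambda_m,\lambda)\to0$, which is uniform in the base point, and the continuity of the fixed map $K^\lambda_\nu$.
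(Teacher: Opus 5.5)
Your proposal is correct and follows the paper's own proof. Part (a) uses the same splitting of the Hausdorff estimate into a term controlled uniformly by $\rho(\lambda_m,\lambda)$ and a term controlled by the continuity of $K_\nu^{\lambda}$; part (b) is closedness in the compact $X$; part (c) applies the sequential criterion of Proposition~\ref{prop:sequential-set-valued}, with nonemptiness coming from $S(\lambda)\subset f(\lambda)$. You spell out the final limiting step through the $1$-Lipschitz distance function, which the paper leaves implicit.
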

\begin{proof}
  \ref{it:feas-closed} Suppose~$\lambda_{m}\to\lambda$ in~$\rho$ and~$x^{m}\to x$ in~$X$, with~$x^{m}\in f(\lambda_{m})$.
  For each~$\nu$,
  \begin{align*}
&h_{\nu}\big(K_{\nu}^{\lambda_{m}}(x^{m}_{-\nu}),
K_{\nu}^{\lambda}(x_{-\nu})\big)\\
&\quad\leq
\sup_{y\in X_{-\nu}}h_{\nu}\big(K_{\nu}^{\lambda_{m}}(y),
K_{\nu}^{\lambda}(y)\big)
+h_{\nu}\big(K_{\nu}^{\lambda}(x^{m}_{-\nu}),
K_{\nu}^{\lambda}(x_{-\nu})\big)
\to0\eqfinp
  \end{align*}
  The first term tends to zero by \eqref{eq:parameter-metric} and the second by the Hausdorff continuity of~$K_{\nu}^{\lambda}$.
  Since~$x^{m}_{\nu}\in K_{\nu}^{\lambda_{m}}(x^{m}_{-\nu})$ and~$x_{\nu}^{m}\to x_{\nu}$, the limiting point satisfies~$x_{\nu}\in K_{\nu}^{\lambda}(x_{-\nu})$.
  This holds for every~$\nu$, so~$x\in f(\lambda)$.
  This shows that the graph is closed.
  \ref{it:feas-compact} By~\ref{it:feas-closed}, each value is a closed subset of compact~$X$ and is therefore compact.

  \ref{it:feas-usc} On~$\Lambda$ the values are nonempty, and the sequential upper-semicontinuity criterion of Proposition~\ref{prop:sequential-set-valued} applies (see also \cite{AubinFrankowska2009}).
\end{proof}

\subsection{Convex ordering cones: reduction to scalar games}
\label{subsec:scalarization}

When the ordering cones are convex, ideal Nash equilibria are exactly the common equilibria of a family of scalar generalized games, obtained by weighting each player's payoff with vectors of the dual cone.
We record this reduction and its consequences, and explain why it is not available for nonconvex cones.

The dual cone of~$C_{\nu}$ is~$C_{\nu}^{\ast}:=\{\xi\in\R^{d_{\nu}}\mid\langle\xi,y\rangle\geq0 \text{ for all }y\in C_{\nu}\}$.
We say that a nonempty set~$G_{\nu}\subset\R^{d_{\nu}}$ \emph{describes}~$C_{\nu}$ if
\begin{align*}
  C_{\nu}=\big\{y\in\R^{d_{\nu}}\mid\langle\xi,y\rangle\geq0
  \ \text{for all }\xi\in G_{\nu}\big\}\eqfinp
\end{align*}
By the bipolar theorem~\cite{RockafellarWets1998},~$C_{\nu}^{\ast}$ describes every closed convex cone~$C_{\nu}$.
If~$C_{\nu}=\{y \in \R^{d}\mid\Xi_{\nu} y\in\R_{+}^{k_{\nu}}\}$ is polyhedral, the rows~$\xi_{\nu,1},\dots,\xi_{\nu,k_{\nu}}$ of~$\Xi_{\nu}$ describe~$C_{\nu}$.
By the Minkowski--Weyl theorem~\cite{RockafellarWets1998}, every polyhedral convex cone has such a representation.
For~$C_{\nu}=\R_{+}^{d_{\nu}}$, the standard unit vectors~$e_{1},\dots,e_{d_{\nu}}$ describe~$C_{\nu}$.

For~$\lambda\in\mathcal{P}$ and a weight profile~$\xi=(\xi_{\nu})_{\nu\in N}\in\prod_{\nu\in N}\R^{d_{\nu}}$, let~$\Gamma_{\xi}(\lambda)$ be the scalar generalized game in which player~$\nu$ maximizes~$\langle\xi_{\nu},u_{\nu}^{\lambda}\rangle$ subject to~$K_{\nu}^{\lambda}$.
Its set of generalized Nash equilibria is
\begin{align*}
  \GNE\big(\Gamma_{\xi}(\lambda)\big):=
  \Big\{x\in f(\lambda)\mid
    \langle\xi_{\nu},u_{\nu}^{\lambda}(w_{\nu},x_{-\nu})\rangle
    \leq\langle\xi_{\nu},u_{\nu}^{\lambda}(x)\rangle
  \ \text{for all }\nu\in N,\ w_{\nu}\in K_{\nu}^{\lambda}(x_{-\nu})\Big\}\eqfinp
\end{align*}

Next, using the dual cone trick, we show that INE can be reformulated equivalently into a common Nash equilibrium of corresponding scalar generalized Nash equilibrium problems.

\begin{proposition}
  \label{prop:scalar-reduction}
  Let each~$C_{\nu}$ be a closed convex cone, and let~$G_{\nu}$ describe~$C_{\nu}$ for every~$\nu\in N$.
  Then, for every~$\lambda\in\mathcal{P}$,
  \begin{align}\label{eq:scalar-reduction}
    S(\lambda)=\bigcap_{\xi\in\prod_{\nu\in N}G_{\nu}}
    \GNE\bigl(\Gamma_{\xi}(\lambda)\bigr)\eqfinp
  \end{align}
  Moreover, the intersection may be taken over any family~$(\xi^{t})_{t\in T}\subset\prod_{\nu\in N}G_{\nu}$ such that~$\{\xi^{t}_{\nu}\mid t\in T\}=G_{\nu}$ for every~$\nu\in N$.
\end{proposition}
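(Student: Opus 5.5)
The plan is to prove the two inclusions in \eqref{eq:scalar-reduction} by reading off the definitions pointwise. Only the defining property of a describing set $G_{\nu}$ is used, so convexity and closedness of $C_{\nu}$ enter only to guarantee that such a $G_{\nu}$ exists (for instance $G_{\nu}=C_{\nu}^{\ast}$ by the bipolar theorem). Fix $\lambda\in\mathcal{P}$. Both sides of \eqref{eq:scalar-reduction} are subsets of $f(\lambda)$, so I will fix a feasible $x\in f(\lambda)$ throughout and compare the two membership conditions.

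For ``$\subset$'', take $x\in S(\lambda)$. Fix $\nu\in N$ and $w_{\nu}\in K_{\nu}^{\lambda}(x_{-\nu})$, and set $y:=u_{\nu}^{\lambda}(x)-u_{\nu}^{\lambda}(w_{\nu},x_{-\nu})$. Then $y\in C_{\nu}$. Since $G_{\nu}$ describes $C_{\nu}$, we get $\langle\xi_{\nu},y\rangle\geq0$ for every $\xi_{\nu}\in G_{\nu}$, which is exactly the scalar equilibrium inequality for player $\nu$ in $\Gamma_{\xi}(\lambda)$. This holds for all $\nu$ and all $\xi\in\prod_{\nu}G_{\nu}$, so $x$ lies in the intersection.

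For ``$\supset$'', take $x$ in the intersection. Fix $\nu$, $w_{\nu}$ and $y$ as above, and fix an arbitrary $\eta\in G_{\nu}$. I want to choose a profile $\xi\in\prod_{\mu}G_{\mu}$ with $\xi_{\nu}=\eta$. This needs every $G_{\mu}$ to be nonempty, which is part of the definition of describing. The $\nu$-th inequality of $\GNE(\Gamma_{\xi}(\lambda))$ then gives $\langle\eta,y\rangle\geq0$. Since $\eta\in G_{\nu}$ was arbitrary, the describing property yields $y\in C_{\nu}$, that is, $u_{\nu}^{\lambda}(w_{\nu},x_{-\nu})\preceq_{C_{\nu}}u_{\nu}^{\lambda}(x)$. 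As $\nu$ and $w_{\nu}$ were arbitrary and $x$ is feasible, $x\in S(\lambda)$.

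For the ``moreover'' part, run the same two arguments with the intersection taken over $t\in T$. The inclusion ``$\subset$'' is immediate, because $\xi^{t}\in\prod_{\nu}G_{\nu}$ for each $t$. For ``$\supset$'', the only step that changes is the choice of the profile $\xi$. Instead of building $\xi$ componentwise, I will use the hypothesis $\{\xi^{t}_{\nu}\mid t\in T\}=G_{\nu}$ to pick $t\in T$ with $\xi^{t}_{\nu}=\eta$. The $\nu$-th inequality of $\GNE(\Gamma_{\xi^{t}}(\lambda))$ then gives $\langle\eta,y\rangle\geq0$. The only point needing care is that each player's equilibrium condition depends solely on that player's own weight, so the other components of $\xi^{t}$ play no role. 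There is no real obstacle, since the argument is a direct decoupling of the players' conditions.
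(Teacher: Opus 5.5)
Your proof is correct and follows the paper's argument. Like the paper, you fix a feasible $x$, use the describing property to turn membership in $S(\lambda)$ into the scalar inequalities, and note that each player's condition depends only on its own weight, so the hypothesis $\{\xi^{t}_{\nu}\mid t\in T\}=G_{\nu}$ makes the two conditions coincide; your two-inclusion write-up and the nonemptiness check for the full product just make explicit what the paper leaves implicit.
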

\begin{proof}
  Both sides are subsets of~$f(\lambda)$, so fix~$x\in f(\lambda)$.
  By Definitions~\ref{def:efficiency} and~\ref{def:game},~$x\in S(\lambda)$ if and only if~$u_{\nu}^{\lambda}(x)-u_{\nu}^{\lambda}(w_{\nu},x_{-\nu})\in C_{\nu}$ for all~$\nu\in N$ and~$w_{\nu}\in K_{\nu}^{\lambda}(x_{-\nu})$.
  Since~$G_{\nu}$ describes~$C_{\nu}$, this holds if and only if
  \begin{align*}
    \big\langle\xi_{\nu},\,u_{\nu}^{\lambda}(x)-u_{\nu}^{\lambda}(w_{\nu},x_{-\nu})\big\rangle
    \geq 0
    \quad\text{for all }\nu\in N,\ \xi_{\nu}\in G_{\nu},\
    w_{\nu}\in K_{\nu}^{\lambda}(x_{-\nu})\eqfinp
  \end{align*}
  On the other hand,~$x\in\GNE(\Gamma_{\xi^{t}}(\lambda))$ for every~$t\in T$ if and only if the same inequality holds with~$\xi^{t}_{\nu}$ in place of~$\xi_{\nu}$, for all~$t\in T$,~$\nu\in N$, and~$w_{\nu}\in K_{\nu}^{\lambda}(x_{-\nu})$.
  Since~$\{\xi^{t}_{\nu}\mid t\in T\}=G_{\nu}$ for every~$\nu$, the two conditions coincide.
  The full product~$\prod_{\nu\in N}G_{\nu}$ is such a family, which gives \eqref{eq:scalar-reduction}.
\end{proof}

We now combine Proposition~\ref{prop:scalar-reduction} with the closed-graph property of solution maps of parametric scalar generalized Nash games established by~\citet[Theorem~3.1]{AusselSvensson2018} (see also \cite[Theorem~3.3.4 and its proof]{AusselSvensson2020}, and \cite[Corollary~4.4]{LignolaMorgan1998} for strategy sets in reflexive Banach spaces).
The following lemma is extracted from the first step in the proof of~\citet[Theorem~4.2.4]{Salas2025}, which follows the proof of~\citet[Theorem~3.1]{AusselSvensson2018}.
Although the original proof was under the setting of a Euclidean space, the argument however uses only the sequential characterization of lower semicontinuity.
We include its proof for the sake of completeness.

\begin{lemma} \label{lem:gnep-closed}
  Let~$T$ and~$Y_{1},\dots,Y_{M}$ be metric spaces, and put~$Y:=\prod_{j=1}^{M}Y_{j}$.
  For each~$j$, let~$g_{j}:T\times Y_{j}\times Y_{-j}\to\R$ be continuous, and let~$Q_{j}:T\times Y_{-j}\multimap Y_{j}$ be lower semicontinuous with closed graph.
  For~$t\in T$, let~$E(t)$ be the set of generalized Nash equilibria of the game in which player~$j$ minimizes~$g_{j}(t,\cdot,y_{-j})$ over~$Q_{j}(t,y_{-j})$, that is,
  \begin{align*}
    \begin{aligned}
      E(t):=\big\{y\in Y\mid\ {}&y_{j}\in Q_{j}(t,y_{-j})\ \text{and}\
        g_{j}(t,y_{j},y_{-j})\leq g_{j}(t,z_{j},y_{-j})\\
                                &\text{for all }z_{j}\in Q_{j}(t,y_{-j})\text{ and }j=1,\dots,M\big\}\eqfinp
    \end{aligned}
  \end{align*}
  Then the set-valued map~$E:T\multimap Y$ has a closed graph.
\end{lemma}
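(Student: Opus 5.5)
The plan is to verify the closed-graph property sequentially.

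Take a sequence $(t_m,y^m)$ in $\gr E$ with $t_m\to t$ in $T$ and $y^m\to y$ in $Y$. We must show that $y\in E(t)$. Since all spaces are metric and the product carries the product topology, convergence in $Y$ is coordinatewise. In particular, for each $j$ we have $y^m_j\to y_j$ and $y^m_{-j}\to y_{-j}$, so $(t_m,y^m_{-j})\to(t,y_{-j})$ in $T\times Y_{-j}$.

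\textbf{Feasibility.} For each $j$ we know $y^m_j\in Q_j(t_m,y^m_{-j})$. Hence the points $\bigl((t_m,y^m_{-j}),y^m_j\bigr)$ lie in $\gr Q_j$ and converge to $\bigl((t,y_{-j}),y_j\bigr)$. Because $\gr Q_j$ is closed, the limit lies in it as well, which gives $y_j\in Q_j(t,y_{-j})$ for every $j$.

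\textbf{Optimality.} Fix $j$ and an arbitrary competitor $z_j\in Q_j(t,y_{-j})$. The key step, which is the only one needing care, is to transport $z_j$ to the perturbed problems. Lower semicontinuity of $Q_j$ at $(t,y_{-j})$ is used here in its sequential form: for the sequence $(t_m,y^m_{-j})\to(t,y_{-j})$ there exist $z^m_j\in Q_j(t_m,y^m_{-j})$ with $z^m_j\to z_j$.

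Proposition~\ref{prop:sequential-set-valued}\ref{it:seq-lsc} is stated with a compact codomain, so I will not rely on it verbatim. I will instead prove the needed direction directly from the neighborhood definition of lsc, since $Y_j$ is only a metric space:
\begin{itemize}
\item For each $k$, apply lsc to the open ball $B(z_j,1/k)$. This gives an index $m_k$ such that $Q_j(t_m,y^m_{-j})\cap B(z_j,1/k)\neq\emptyset$ for all $m\geq m_k$.
\item Take $m_1<m_2<\cdots$ increasing.
\item Choose $z^m_j$ in that intersection for $m_k\le m<m_{k+1}$, and choose $z^m_j$ arbitrarily in $Q_j(t_m,y^m_{-j})$ for $m<m_1$.
\end{itemize}
This yields $z^m_j\to z_j$. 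Nonemptiness of the values for the initial indices holds for $m$ large by lsc, which is all that matters.

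Since $y^m\in E(t_m)$, the equilibrium inequality gives
\begin{align*}
g_j(t_m,y^m_j,y^m_{-j})\le g_j(t_m,z^m_j,y^m_{-j}) \quad\text{for all } m\eqfinp
\end{align*}
Letting $m\to\infty$ and using the joint continuity of $g_j$ on $T\times Y_j\times Y_{-j}$, we obtain
\begin{align*}
g_j(t,y_j,y_{-j})\le g_j(t,z_j,y_{-j})\eqfinp
\end{align*}
Since $z_j\in Q_j(t,y_{-j})$ and $j$ were arbitrary, together with feasibility this shows $y\in E(t)$. Hence $\gr E$ is closed.

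No compactness is used anywhere, in keeping with the remark that only the sequential characterization of lower semicontinuity is needed.
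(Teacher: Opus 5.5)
Your proof is correct and follows the paper's argument essentially verbatim: closedness of $\gr Q_j$ gives feasibility, the sequential form of lower semicontinuity lifts each competitor $z_j$ to some $z_j^m\in Q_j(t_m,y^m_{-j})$ with $z_j^m\to z_j$, and continuity of $g_j$ passes the equilibrium inequality to the limit. Your extra derivation of the sequential lsc property from the neighborhood definition is a sound addition. The choice for the early indices is harmless, since $Q_j(t_m,y^m_{-j})$ is nonempty for every $m$ (it contains $y^m_j$), not only for large $m$.
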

\begin{proof}
  Let~$t_{k}\to t$ and~$y^{k}\in E(t_{k})$ with~$y^{k}\to y$, and fix~$j$.
  Since~$(t_{k},y^{k}_{-j},y^{k}_{j})$ lies in the closed graph of~$Q_{j}$, we get~$y_{j}\in Q_{j}(t,y_{-j})$.
  Given~$z_{j}\in Q_{j}(t,y_{-j})$, lower semicontinuity of~$Q_{j}$ yields~$z_{j}^{k}\in Q_{j}(t_{k},y^{k}_{-j})$, for all large~$k$, with~$z_{j}^{k}\to z_{j}$.
  Passing to the limit in~$g_{j}(t_{k},y^{k}_{j},y^{k}_{-j})\leq g_{j}(t_{k},z^{k}_{j},y^{k}_{-j})$ by continuity of~$g_{j}$ gives~$g_{j}(t,y_{j},y_{-j})\leq g_{j}(t,z_{j},y_{-j})$.
  Hence~$y\in E(t)$.
\end{proof}

The next lemma verifies the hypotheses of Lemma~\ref{lem:gnep-closed} for the data of~$\Gamma(\lambda)$.
Note that we do not need any assumption on the cones~$C_{\nu}$'s here.

\begin{lemma}
  \label{lem:data-continuity}
  For each~$\nu\in N$:
  \begin{enumerate}[label=\upshape(\alph*)]
    \item\label{it:data-payoff} the map~$(\lambda,x)\mapsto u_{\nu}^{\lambda}(x)$ is continuous on~$\mathcal{P}\times X$;
    \item\label{it:data-constraint} the correspondence~$(\lambda,y)\mapsto K_{\nu}^{\lambda}(y)$ from~$\mathcal{P}\times X_{-\nu}$ to~$X_{\nu}$ has a closed graph and is lower semicontinuous.
  \end{enumerate}
\end{lemma}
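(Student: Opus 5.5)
The plan is to argue sequentially throughout, using the metric $\rho$ from \eqref{eq:parameter-metric} together with the triangle inequality. All spaces involved are metric, so sequential criteria suffice.

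For \ref{it:data-payoff}, let $(\lambda_m,x^m)\to(\lambda,x)$. We split
\begin{align*}
  \|u_\nu^{\lambda_m}(x^m)-u_\nu^{\lambda}(x)\|
  \le \sup_{z\in X}\|u_\nu^{\lambda_m}(z)-u_\nu^{\lambda}(z)\|
  +\|u_\nu^{\lambda}(x^m)-u_\nu^{\lambda}(x)\|\eqfinp
\end{align*}
The first term is bounded by $\rho(\lambda_m,\lambda)\to0$. The second tends to zero by continuity of $u_\nu^\lambda$. This is a uniform-convergence-plus-continuity argument and involves no difficulty.

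For \ref{it:data-constraint}, we reuse the estimate already written in the proof of Lemma~\ref{lem:feasibility}\ref{it:feas-closed}. For $(\lambda_m,y^m)\to(\lambda,y)$ in $\mathcal{P}\times X_{-\nu}$ it gives
\begin{align*}
  h_\nu\bigl(K_\nu^{\lambda_m}(y^m),K_\nu^{\lambda}(y)\bigr)
  \le \rho(\lambda_m,\lambda)+h_\nu\bigl(K_\nu^{\lambda}(y^m),K_\nu^{\lambda}(y)\bigr)\to0\eqfinv
\end{align*}
where the second term vanishes by continuity of $K_\nu^\lambda$ into $(\mathcal{K}(X_\nu),h_\nu)$.

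\emph{Closed graph.} Take $z^m\in K_\nu^{\lambda_m}(y^m)$ with $z^m\to z$. Then
\begin{align*}
  d_\nu\bigl(z,K_\nu^\lambda(y)\bigr)\le d_\nu(z,z^m)+h_\nu\bigl(K_\nu^{\lambda_m}(y^m),K_\nu^\lambda(y)\bigr)\to0\eqfinp
\end{align*}
Since $K_\nu^\lambda(y)$ is compact, hence closed, it follows that $z\in K_\nu^\lambda(y)$.

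\emph{Lower semicontinuity.} We use the sequential characterization: given $z\in K_\nu^\lambda(y)$, we construct $z^m\in K_\nu^{\lambda_m}(y^m)$ with $z^m\to z$. Compactness of $K_\nu^{\lambda_m}(y^m)$ lets us pick $z^m$ as a nearest point to $z$ in that set. Then $d_\nu(z^m,z)=d_\nu(z,K_\nu^{\lambda_m}(y^m))$, and this is at most the Hausdorff distance above, so it tends to $0$.

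The only point needing care is that Proposition~\ref{prop:sequential-set-valued} is stated for compact codomain and a metric domain. Here $X_\nu$ is compact and $\mathcal{P}\times X_{-\nu}$ is metric, so the proposition applies. If one prefers to avoid it, lsc follows directly: if $O$ is open and meets $K_\nu^\lambda(y)$ at $z$, choose $\varepsilon$ with $B(z,\varepsilon)\subset O$. For all parameters whose Hausdorff distance to $K_\nu^\lambda(y)$ is below $\varepsilon$, the set $K_\nu^{\lambda_m}(y^m)$ meets $B(z,\varepsilon)$, and the displayed estimate shows this holds on a neighborhood of $(\lambda,y)$. I expect no real obstacle beyond keeping the two-term triangle splitting clean.
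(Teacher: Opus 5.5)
Your proposal is correct and follows the paper's proof essentially line by line. In (a) you use the same triangle-inequality split bounded by $\rho(\lambda_m,\lambda)$. In (b) you reuse the Hausdorff estimate from Lemma~\ref{lem:feasibility}: closedness of $K_\nu^\lambda(y)$ gives the closed graph, and a selection $z^m$ with $d_\nu(z^m,z)\le h_\nu\bigl(K_\nu^{\lambda_m}(y^m),K_\nu^{\lambda}(y)\bigr)$ together with Proposition~\ref{prop:sequential-set-valued} gives lower semicontinuity, exactly as the paper does. Your direct $\varepsilon$-ball argument is a harmless extra.
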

\begin{proof}
  \ref{it:data-payoff} If~$(\lambda_{m},x^{m})\to(\lambda,x)$, then~$\|u_{\nu}^{\lambda_{m}}(x^{m})-u_{\nu}^{\lambda}(x)\|\leq\rho(\lambda_{m},\lambda) +\|u_{\nu}^{\lambda}(x^{m})-u_{\nu}^{\lambda}(x)\|\to0$ by \eqref{eq:parameter-metric} and continuity of~$u_{\nu}^{\lambda}$.

  \ref{it:data-constraint} If~$(\lambda_{m},y^{m})\to(\lambda,y)$, the estimate in the proof of Lemma~\ref{lem:feasibility} gives~$h_{\nu}(K_{\nu}^{\lambda_{m}}(y^{m}),K_{\nu}^{\lambda}(y))\to0$.
  If~$w^{m}\in K_{\nu}^{\lambda_{m}}(y^{m})$ and~$w^{m}\to w$, then~$d_{\nu}(w,K_{\nu}^{\lambda}(y))=0$, so~$w\in K_{\nu}^{\lambda}(y)$ because this set is closed.
  Hence the graph is closed.
  Given~$w\in K_{\nu}^{\lambda}(y)$, choose~$w^{m}\in K_{\nu}^{\lambda_{m}}(y^{m})$ with~$d_{\nu}(w^{m},w)\leq h_{\nu}(K_{\nu}^{\lambda_{m}}(y^{m}),K_{\nu}^{\lambda}(y))$. 
  Then we have~$w^{m}\to w$, and Proposition~\ref{prop:sequential-set-valued} gives lower semicontinuity.
\end{proof}

We are now ready to present the main stability result in the case where all the ordering cones are convex, which follows from that of its scalar counterparts.

\begin{theorem}
  \label{thm:convex-stability}
  Let each~$C_{\nu}$ be a closed convex cone.
  Then the graph~$\gr S$ is closed.
  Consequently,~$\Lambda$ is closed in~$\mathcal{P}$,~$(\Lambda,\rho)$ is a complete metric space, and~$S:\Lambda\to\mathcal{K}(X)$ is upper semicontinuous with nonempty compact values.
\end{theorem}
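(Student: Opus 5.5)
The plan is to reduce the closed-graph assertion to Lemma~\ref{lem:gnep-closed} through Proposition~\ref{prop:scalar-reduction}. Take $G_{\nu}:=C_{\nu}^{\ast}$, which describes $C_{\nu}$ by the bipolar theorem. Then
\begin{align*}
  S(\lambda)=\bigcap_{\xi\in\prod_{\nu}C_{\nu}^{\ast}}\GNE\bigl(\Gamma_{\xi}(\lambda)\bigr)\eqfinv
\end{align*}
so
\begin{align*}
  \gr S=\bigcap_{\xi}\gr\GNE\bigl(\Gamma_{\xi}(\cdot)\bigr)\eqfinp
\end{align*}
It therefore suffices to show that, for each fixed weight profile $\xi$, the map $\lambda\mapsto\GNE(\Gamma_{\xi}(\lambda))$ has closed graph in $\mathcal{P}\times X$. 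An intersection of closed sets is closed.

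For fixed $\xi$, I would apply Lemma~\ref{lem:gnep-closed} with the following data:
\begin{itemize}
  \item $T=\mathcal{P}$ and $M=\ell$;
  \item $Y_{j}=X_{\nu}$, with $Q_{\nu}(\lambda,y_{-\nu}):=K_{\nu}^{\lambda}(y_{-\nu})$;
  \item $g_{\nu}(\lambda,x_{\nu},x_{-\nu}):=-\langle\xi_{\nu},u_{\nu}^{\lambda}(x)\rangle$, which turns maximization into minimization.
\end{itemize}
Continuity of $g_{\nu}$ on $\mathcal{P}\times X$ follows from Lemma~\ref{lem:data-continuity}\ref{it:data-payoff} composed with the continuous linear functional $\langle\xi_{\nu},\cdot\rangle$. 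Lower semicontinuity and closedness of the graph of $Q_{\nu}$ are exactly Lemma~\ref{lem:data-continuity}\ref{it:data-constraint}. The set $E(\lambda)$ of the lemma then coincides with $\GNE(\Gamma_{\xi}(\lambda))$, so each such graph is closed. Hence $\gr S$ is closed.

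The consequences then follow. For $\Lambda$ closed: let $\lambda_{m}\in\Lambda$ with $\lambda_{m}\to\lambda$, and pick $x^{m}\in S(\lambda_{m})$. By compactness of $X$, a subsequence converges to some $x$, and closedness of $\gr S$ gives $x\in S(\lambda)$. Thus $\lambda\in\Lambda$.

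Completeness of $(\Lambda,\rho)$ follows because $\Lambda$ is a closed subset of the complete space of Proposition~\ref{prop:ambient-complete}.

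Each value $S(\lambda)$ is closed, being a section of a closed graph, hence compact in $X$. It is nonempty on $\Lambda$, so $S:\Lambda\to\mathcal{K}(X)$ is well defined. Upper semicontinuity comes from the closed-graph characterization in Proposition~\ref{prop:sequential-set-valued}\ref{it:seq-usc}, using compactness of $X$. This characterization applies on the metric space $\Lambda$ with the restricted graph, which remains closed.

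The only delicate point is the bookkeeping in the application of Lemma~\ref{lem:gnep-closed}. The lemma's lower semicontinuity must be with respect to the joint variable $(\lambda,y_{-\nu})$, not merely $y_{-\nu}$ for fixed $\lambda$. This is precisely why Lemma~\ref{lem:data-continuity}\ref{it:data-constraint} is stated on $\mathcal{P}\times X_{-\nu}$, so I expect no genuine obstacle. One should just note that no convexity of $X_{\nu}$ or of the payoffs is used: convexity of the cones enters only through the scalar reduction.
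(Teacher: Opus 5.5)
Your proposal is correct and follows essentially the same route as the paper. The paper likewise takes $G_{\nu}=C_{\nu}^{\ast}$ in Proposition~\ref{prop:scalar-reduction} and applies Lemma~\ref{lem:gnep-closed} with $g_{\nu}=-\langle\xi_{\nu},u_{\nu}^{\lambda}\rangle$ and $Q_{\nu}=K_{\nu}^{\lambda}$, using the hypotheses supplied by Lemma~\ref{lem:data-continuity}; it then intersects the closed graphs of the maps $\lambda\mapsto\GNE(\Gamma_{\xi}(\lambda))$ and finishes with the same compactness argument, Proposition~\ref{prop:ambient-complete}, and Proposition~\ref{prop:sequential-set-valued}.
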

\begin{proof}
  Fix~$\xi\in\prod_{\nu\in N}C_{\nu}^{\ast}$.
  In the scalar game~$\Gamma_{\xi}(\lambda)$ defined before Proposition~\ref{prop:scalar-reduction}, player~$\nu$ minimizes~$-\langle\xi_{\nu},u_{\nu}^{\lambda}\rangle$ subject to~$K_{\nu}^{\lambda}$.
  By Lemma~\ref{lem:data-continuity}\ref{it:data-payoff}, these objectives are continuous in~$(\lambda,x)$, and by Lemma~\ref{lem:data-continuity}\ref{it:data-constraint}, the constraint maps~$(\lambda,x_{-\nu})\mapsto K_{\nu}^{\lambda}(x_{-\nu})$ are lower semicontinuous with closed graph.
  Lemma~\ref{lem:gnep-closed}, applied with~$T=\mathcal{P}$,~$Y_{\nu}=X_{\nu}$,~$g_{\nu}(\lambda,x_{\nu},x_{-\nu})=-\langle\xi_{\nu},u_{\nu}^{\lambda}(x)\rangle$, and~$Q_{\nu}(\lambda,x_{-\nu})=K_{\nu}^{\lambda}(x_{-\nu})$, shows that the graph of~$\lambda\mapsto\GNE(\Gamma_{\xi}(\lambda))$ is closed for every~$\xi$.
  By Proposition~\ref{prop:scalar-reduction} with~$G_{\nu}=C_{\nu}^{\ast}$, the graph~$\gr S$ is the intersection of these closed graphs.

  Each~$S(\lambda)$ is then a closed subset of compact~$X$, hence compact.
  If~$\lambda_{m}\in\Lambda$ converge to~$\lambda$ and~$x^{m}\in S(\lambda_{m})$, a subsequence of~$(x^{m})$ converges to some~$x\in X$, and~$x\in S(\lambda)$ by the closed graph.
  Hence~$\Lambda$ is closed, and therefore complete by Proposition~\ref{prop:ambient-complete}.
  Upper semicontinuity of~$S:\Lambda\to\mathcal{K}(X)$ follows from Proposition~\ref{prop:sequential-set-valued}.
\end{proof}

\subsubsection*{Application to interval-valued games}
\label{sec:interval-valued-games}

An \emph{interval-valued optimization problem} (IOP) has an objective whose values are compact intervals \cite{Wu2007}.
Typically, an IOP takes advantage of the LU domination relation, denoted with~$\preceq_{\mathrm{LU}}$, between two compact intervals.
For any two compact intervals~$U:=[\underline{u},\overline{u}]$ and~$V:=[\underline{v},\overline{v}]$, we say
\begin{align*}
  U\preceq_{\mathrm{LU}}V
  \quad\Longleftrightarrow\quad
  \underline{u}\leq\underline{v}
  \ \text{and}\ 
  \overline{u}\leq\overline{v}
  \quad\Longleftrightarrow\quad
  (\underline{u},\overline{u})
  \preceq_{\R_{+}^{2}}
  (\underline{v},\overline{v})\eqfinp
\end{align*}
Thus the map~$[\underline{u},\overline{u}]\mapsto(\underline{u},\overline{u})$ embeds the compact intervals, ordered by~$\preceq_{\mathrm{LU}}$, into~$\R^{2}$ ordered by the closed, convex, pointed cone~$\R_{+}^{2}$.
Its image is the closed set~$\{(a,b)\in\R^{2}\mid a\leq b\}$.

For an interval-valued objective~$\mathcal{I}=[\underline{\varphi},\overline{\varphi}]$ with~$\underline{\varphi}\leq\overline{\varphi}$ on a feasible set~$A$,~\citet{Wu2007} studies minimization and calls a point LU-optimal if it is LU-nondominated.
In the maximization convention of this paper, the LU-nondominated points form~$\Eff_{\R_{+}^{2}}\big((\underline{\varphi},\overline{\varphi});A\big)$, and we call the points of~$\IEff_{\R_{+}^{2}}\big((\underline{\varphi},\overline{\varphi});A\big)$ \emph{ideal LU-solutions}.
The two sets coincide whenever an ideal LU-solution exists (recall the discussion after Definition~\ref{def:efficiency}).

In a \emph{generalized interval-valued game}, each player~$\nu\in N$ has a constraint map~$K_{\nu}:X_{-\nu}\multimap X_{\nu}$ and an interval-valued payoff~$U_{\nu}(x)=[\underline{u}_{\nu}(x),\overline{u}_{\nu}(x)]$ with~$\underline{u}_{\nu}\leq\overline{u}_{\nu}$ on~$X$.
A feasible profile~$x^{\ast}$ is an \emph{ideal Nash equilibrium} if, for every~$\nu\in N$,
\begin{align*}
  U_{\nu}(w_{\nu},x_{-\nu}^{\ast})\preceq_{\mathrm{LU}}U_{\nu}(x^{\ast})
  \qquad\text{for all }w_{\nu}\in K_{\nu}(x_{-\nu}^{\ast})\eqfinp
\end{align*}
By the embedding, this is exactly an INE in the sense of Definition~\ref{def:game} of the multi-criteria game with~$u_{\nu}=(\underline{u}_{\nu},\overline{u}_{\nu})$,~$d_{\nu}=2$, and~$C_{\nu}=\R_{+}^{2}$.

Accordingly, in the setting of Section~\ref{sec:stability} take~$d_{\nu}=2$ and~$C_{\nu}=\R_{+}^{2}$ for every~$\nu\in N$, and define
\begin{align}\label{eq:interval-setting}
  \mathcal{P}_{\mathrm{I}}:=\big\{\lambda\in\mathcal{P}\mid
    u_{\nu,1}^{\lambda}(x)\leq u_{\nu,2}^{\lambda}(x)
  \ \text{for all }\nu\in N\text{ and }x\in X\big\}\eqsepv
  \qquad
  \Lambda_{\mathrm{I}}:=\Lambda\cap\mathcal{P}_{\mathrm{I}}\eqfinp
\end{align}
Each~$\lambda\in\mathcal{P}_{\mathrm{I}}$ is an interval-valued game with~$U_{\nu}^{\lambda}=[u_{\nu,1}^{\lambda},u_{\nu,2}^{\lambda}]$, and~$S(\lambda)$ is its set of ideal Nash equilibria.

\begin{corollary}
  \label{cor:interval-stability}
  In the setting of~\eqref{eq:interval-setting}, the following hold:
  \begin{enumerate}[label=\upshape(\alph*)]
    \item\label{it:int-closed} $\mathcal{P}_{\mathrm{I}}$ and~$\Lambda_{\mathrm{I}}$ are closed in~$\mathcal{P}$, and~$(\Lambda_{\mathrm{I}},\rho)$ is a complete metric space;
    \item\label{it:int-usc} the set~$\{(\lambda,x)\in\mathcal{P}_{\mathrm{I}}\times X\mid x\in S(\lambda)\}$ is closed, and~$S:\Lambda_{\mathrm{I}}\to\mathcal{K}(X)$ is upper semicontinuous;
    \item\label{it:int-scalar} $S(\lambda)=\GNE\big(\Gamma_{\xi^{1}}(\lambda)\big)\cap\GNE\big(\Gamma_{\xi^{2}}(\lambda)\big)$ for every~$\lambda\in\mathcal{P}_{\mathrm{I}}$, where~$\xi^{i}_{\nu}:=e_{i}$ for all~$\nu\in N$ and~$i=1,2$.
  \end{enumerate}
\end{corollary}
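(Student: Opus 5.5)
Since $\R_{+}^{2}$ is a closed convex cone, the plan is to read everything off Theorem~\ref{thm:convex-stability} and Proposition~\ref{prop:scalar-reduction}. The only genuinely new ingredient is closedness of $\mathcal{P}_{\mathrm{I}}$. After that, the statements about $\Lambda_{\mathrm{I}}$ and $S$ come from intersecting closed sets.

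For \ref{it:int-closed}, I will show that $\mathcal{P}_{\mathrm{I}}$ is closed. Take $\lambda_m\in\mathcal{P}_{\mathrm{I}}$ with $\rho(\lambda_m,\lambda)\to0$. By \eqref{eq:parameter-metric}, $u_{\nu}^{\lambda_m}\to u_{\nu}^{\lambda}$ uniformly, so in particular pointwise in each component. The inequality $u_{\nu,1}^{\lambda_m}(x)\le u_{\nu,2}^{\lambda_m}(x)$ therefore passes to the limit for every $\nu$ and $x$, which gives $\lambda\in\mathcal{P}_{\mathrm{I}}$. Equivalently, $\mathcal{P}_{\mathrm{I}}$ is an intersection over $(\nu,x)$ of preimages of the closed set $\{(a,b)\mid a\le b\}$ under the continuous evaluation maps $\lambda\mapsto u_{\nu}^{\lambda}(x)$ (cf.\ Lemma~\ref{lem:data-continuity}\ref{it:data-payoff}). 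Theorem~\ref{thm:convex-stability} with $C_{\nu}=\R_{+}^{2}$ makes $\Lambda$ closed, so $\Lambda_{\mathrm{I}}=\Lambda\cap\mathcal{P}_{\mathrm{I}}$ is closed in $\mathcal{P}$. Completeness of $(\Lambda_{\mathrm{I}},\rho)$ then follows from Proposition~\ref{prop:ambient-complete}, since a closed subset of a complete metric space is complete.

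For \ref{it:int-usc}, the set in question equals $\gr S\cap(\mathcal{P}_{\mathrm{I}}\times X)$. Here $\gr S$ is closed by Theorem~\ref{thm:convex-stability}, and $\mathcal{P}_{\mathrm{I}}\times X$ is closed by part \ref{it:int-closed}, so the intersection is closed. Upper semicontinuity of $S$ on $\Lambda_{\mathrm{I}}$ is then the restriction of the upper semicontinuous map $S:\Lambda\to\mathcal{K}(X)$ from Theorem~\ref{thm:convex-stability}. Alternatively, it follows again from Proposition~\ref{prop:sequential-set-valued}\ref{it:seq-usc}, because $X$ is compact and the values are nonempty and compact on $\Lambda_{\mathrm{I}}$.

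For \ref{it:int-scalar}, I apply Proposition~\ref{prop:scalar-reduction} with $G_{\nu}=\{e_1,e_2\}$, which describes $\R_{+}^{2}$. I use the two-element family $T=\{1,2\}$ with $\xi^{t}_{\nu}=e_t$ for all $\nu$. For every $\nu$ we have $\{\xi^{1}_{\nu},\xi^{2}_{\nu}\}=G_{\nu}$, which is exactly the hypothesis of the "moreover" clause, so $S(\lambda)=\GNE(\Gamma_{\xi^1}(\lambda))\cap\GNE(\Gamma_{\xi^2}(\lambda))$. This in fact holds for every $\lambda\in\mathcal{P}$, not only on $\mathcal{P}_{\mathrm{I}}$.

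The only step needing any care is the closedness of $\mathcal{P}_{\mathrm{I}}$, and that comes down to pointwise convergence, which uniform convergence guarantees. Everything else is direct bookkeeping from the earlier results.
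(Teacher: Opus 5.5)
Your proposal is correct and follows essentially the same route as the paper. The paper likewise obtains closedness of $\mathcal{P}_{\mathrm{I}}$ from the fact that uniform limits preserve $u_{\nu,1}^{\lambda}\leq u_{\nu,2}^{\lambda}$, and it intersects $\Lambda$ and $\gr S$ from Theorem~\ref{thm:convex-stability} with $\mathcal{P}_{\mathrm{I}}$ (respectively $\mathcal{P}_{\mathrm{I}}\times X$), deduces completeness from Proposition~\ref{prop:ambient-complete}, and reads off part~\ref{it:int-scalar} from Proposition~\ref{prop:scalar-reduction} with $G_{\nu}=\{e_{1},e_{2}\}$ and the family $(\xi^{1},\xi^{2})$.
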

\begin{proof}
  \ref{it:int-closed}--\ref{it:int-usc} Uniform limits preserve the inequalities~$u_{\nu,1}^{\lambda}\leq u_{\nu,2}^{\lambda}$, so~$\mathcal{P}_{\mathrm{I}}$ is closed.
  Both claims then follow from Theorem~\ref{thm:convex-stability} by intersecting~$\Lambda$ with~$\mathcal{P}_{\mathrm{I}}$ and~$\gr S$ with~$\mathcal{P}_{\mathrm{I}}\times X$.
  The completeness follows from Proposition~\ref{prop:ambient-complete}.

  \ref{it:int-scalar} This is Proposition~\ref{prop:scalar-reduction} with~$G_{\nu}=\{e_{1},e_{2}\}$ and the family~$(\xi^{1},\xi^{2})$.
\end{proof}

\subsection{Closed, possibly nonconvex, ordering cones}
\label{subsec:general-stability}

The results of this subsection hold for every closed ordering cone. 
The next theorem proves equilibrium stability directly, without requiring that ideal payoff vectors exist at arbitrary feasible profiles.

\begin{theorem}
  \label{thm:equilibrium-stability}
  Let each~$C_{\nu}$ be a closed cone.
  For the parameterized games \eqref{eq:parameter-game} with the metric \eqref{eq:parameter-metric}, the graph~$\gr S$ is closed, where~$S:\mathcal{P}\multimap X$ permits empty values.
  Consequently:
  \begin{enumerate}[label=\upshape(\alph*)]
    \item\label{it:stab-closed} the set~$\Lambda$ in \eqref{eq:ideal-equilibrium-set} is a closed subset of~$\mathcal{P}$, and~$(\Lambda,\rho)$ is a complete metric space;
    \item\label{it:stab-compact} $S(\lambda)$ is nonempty and compact for every~$\lambda\in\Lambda$;
    \item\label{it:stab-usc} $S:\Lambda\to\mathcal{K}(X)$ is upper semicontinuous.
  \end{enumerate}
\end{theorem}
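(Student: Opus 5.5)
The plan is to prove closedness of $\gr S$ directly by a sequential argument, modelled on Lemma~\ref{lem:gnep-closed}. The one change is that the scalar inequality is replaced by membership in the closed set $C_{\nu}$, so no convexity or dual-cone description is needed. The consequences \ref{it:stab-closed}--\ref{it:stab-usc} will then follow exactly as in the second paragraph of the proof of Theorem~\ref{thm:convex-stability}.

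For the closed graph, take $\lambda_{m}\to\lambda$ in $\rho$ and $x^{m}\in S(\lambda_{m})$ with $x^{m}\to x$ in $X$.

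\begin{enumerate}[label=\upshape(\roman*)]
\item \emph{Feasibility.} Since $S(\lambda_{m})\subset f(\lambda_{m})$, Lemma~\ref{lem:feasibility}\ref{it:feas-closed} gives $x\in f(\lambda)$.
\item \emph{Ideal property.} Fix $\nu\in N$ and $w_{\nu}\in K_{\nu}^{\lambda}(x_{-\nu})$. By the lower semicontinuity in Lemma~\ref{lem:data-continuity}\ref{it:data-constraint}, applied along $(\lambda_{m},x^{m}_{-\nu})\to(\lambda,x_{-\nu})$, we obtain $w_{\nu}^{m}\in K_{\nu}^{\lambda_{m}}(x^{m}_{-\nu})$ with $w_{\nu}^{m}\to w_{\nu}$. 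Concretely, pick $w_{\nu}^{m}$ realizing $d_{\nu}(w_{\nu},K_{\nu}^{\lambda_{m}}(x^{m}_{-\nu}))$, which is bounded by the Hausdorff distance shown to tend to zero in the proof of Lemma~\ref{lem:feasibility}.
\item \emph{Passing to the limit.} The equilibrium property of $x^{m}$ yields
\[
  u_{\nu}^{\lambda_{m}}(x^{m})-u_{\nu}^{\lambda_{m}}(w_{\nu}^{m},x^{m}_{-\nu})\in C_{\nu}\quad\text{for every } m.
\]
By the joint continuity of Lemma~\ref{lem:data-continuity}\ref{it:data-payoff}, both terms converge: $(\lambda_{m},x^{m})\to(\lambda,x)$ and $(\lambda_{m},(w^{m}_{\nu},x^{m}_{-\nu}))\to(\lambda,(w_{\nu},x_{-\nu}))$. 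Since $C_{\nu}$ is closed, the limit $u_{\nu}^{\lambda}(x)-u_{\nu}^{\lambda}(w_{\nu},x_{-\nu})$ lies in $C_{\nu}$.
\end{enumerate}

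As $\nu$ and $w_{\nu}$ were arbitrary, $x\in S(\lambda)$. Only closedness of $C_{\nu}$ is used, so neither convexity nor pointedness enters.

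For the consequences:
\begin{itemize}
\item \ref{it:stab-compact}: each $S(\lambda)$ is closed in the compact space $X$, hence compact, and it is nonempty on $\Lambda$ by definition.
\item \ref{it:stab-closed}: if $\lambda_{m}\in\Lambda$ and $\lambda_{m}\to\lambda$, choose $x^{m}\in S(\lambda_{m})$. By compactness of $X$ a subsequence converges to some $x$, and the closed graph gives $x\in S(\lambda)$, so $\lambda\in\Lambda$. Thus $\Lambda$ is closed, and completeness follows from Proposition~\ref{prop:ambient-complete}.
\item \ref{it:stab-usc}: apply Proposition~\ref{prop:sequential-set-valued}\ref{it:seq-usc} to $S:\Lambda\to\mathcal{K}(X)$, using that $X$ is compact and $\gr S$ is closed.
\end{itemize}

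The only delicate step is the recovery sequence $w_{\nu}^{m}$ in step (ii). The constraint set moves with both the parameter and the opponents' profile, so the Hausdorff estimate must combine the uniform term of $\rho$ with continuity of $K_{\nu}^{\lambda}$ in $x_{-\nu}$. This is already handled in Lemma~\ref{lem:data-continuity}\ref{it:data-constraint}, so I expect no real obstacle.
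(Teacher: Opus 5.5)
Your proposal is correct and follows essentially the same route as the paper. Both run a sequential closed-graph argument: feasibility of the limit comes from Lemma~\ref{lem:feasibility}\ref{it:feas-closed}, the deviation $w_{\nu}$ is recovered by nearest points in $K_{\nu}^{\lambda_{m}}(x^{m}_{-\nu})$ controlled by the Hausdorff estimate, closedness of $C_{\nu}$ gives the limit, and (a)--(c) follow from compactness of $X$ with Propositions~\ref{prop:ambient-complete} and~\ref{prop:sequential-set-valued}; the only difference is that you cite Lemma~\ref{lem:data-continuity} where the paper redoes those estimates inline.
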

\begin{proof}
  Let~$\lambda_{m}\to\lambda$ in~$\mathcal{P}$ and~$x^{m}\to x$ in~$X$, with~$x^{m}\in S(\lambda_{m})$.
  By Lemma~\ref{lem:feasibility}\ref{it:feas-closed},~$x\in f(\lambda)$.
  Fix a player~$\nu\in N$ and an arbitrary feasible deviation~$w_{\nu}\in K_{\nu}^{\lambda}(x_{-\nu})$.
  As in the proof of Lemma~\ref{lem:feasibility},
  \begin{align*}
    h_{\nu}\big(K_{\nu}^{\lambda_{m}}(x_{-\nu}^{m}),
    K_{\nu}^{\lambda}(x_{-\nu})\big)\to0\eqfinp
  \end{align*}
  Therefore one may choose~$w_{\nu}^{m}\in K_{\nu}^{\lambda_{m}}(x_{-\nu}^{m})$ so that
  \begin{align*}
    d(w_{\nu}^{m}, w_{\nu}) = d(w_{\nu}, K_{\nu}^{\lambda_{m}}(x_{-\nu}^{m})) \leq h(K_{\nu}^{\lambda}(x_{-\nu}), K_{\nu}^{\lambda_{m}}(x_{-\nu}^{m})) \to 0
    \eqfinp
  \end{align*}
  This gives~$w_{\nu}^{m}\to w_{\nu}$.
  Since~$x^{m}$ is an INE of~$\Gamma(\lambda_{m})$, Definition~\ref{def:game} gives
  \begin{align}\label{eq:approx-ideal-order}
    u_{\nu}^{\lambda_{m}}(x^{m})-
    u_{\nu}^{\lambda_{m}}(w_{\nu}^{m},x_{-\nu}^{m})\in C_{\nu}\eqfinp
  \end{align}
  Uniform convergence of~$u_{\nu}^{\lambda_{m}}$ to~$u_{\nu}^{\lambda}$ and continuity of~$u_{\nu}^{\lambda}$ imply
  \begin{align*}
    u_{\nu}^{\lambda_{m}}(x^{m})&\to u_{\nu}^{\lambda}(x)\eqfinv\\
    u_{\nu}^{\lambda_{m}}(w_{\nu}^{m},x_{-\nu}^{m})
                                &\to u_{\nu}^{\lambda}(w_{\nu},x_{-\nu})\eqfinp
  \end{align*}
  As~$C_{\nu}$ is closed, passing to the limit in \eqref{eq:approx-ideal-order} yields
  \begin{align*}
    u_{\nu}^{\lambda}(x)-u_{\nu}^{\lambda}(w_{\nu},x_{-\nu})
    \in C_{\nu}\eqfinp
  \end{align*}
  Thus~$u_{\nu}^{\lambda}(w_{\nu},x_{-\nu}) \preceq_{C_{\nu}}u_{\nu}^{\lambda}(x)$.
  Because both~$\nu$ and its feasible deviation~$w_{\nu}$ were arbitrary, we have shown that~$x\in S(\lambda)$.
  This proves the graph of~$S$ is closed.

  For each~$\lambda\in\mathcal{P}$,~$S(\lambda)$ is a closed subset of compact~$X$ and hence compact.
  To see that~$\Lambda$ is closed, let~$\lambda_{m}\in\Lambda$ converge to~$\lambda\in\mathcal{P}$ and select~$x^{m}\in S(\lambda_{m})$.
  Compactness of~$X$ gives a subsequence~$x^{m_{k}}\to x$, and closedness of~$\gr S$ yields~$x\in S(\lambda)$.
  Therefore~$\lambda\in\Lambda$.
  Proposition~\ref{prop:ambient-complete} then implies completeness of~$(\Lambda,\rho)$.
  Finally, the sequential criterion in Proposition~\ref{prop:sequential-set-valued} shows that the nonempty compact-valued restriction~$S:\Lambda\to\mathcal{K}(X)$ is upper semicontinuous.
\end{proof}

Since~$(\Lambda,\rho)$ is complete, the upper semicontinuity in Theorem~\ref{thm:equilibrium-stability} upgrades to continuity at most parameters, in the sense of Baire category.
This is also known as the generic stability result.

\begin{corollary}
  \label{cor:generic-stability}
  Let each~$C_{\nu}$ be a closed cone.
  There is a dense residual subset~$\Lambda^{\ast}\subset\Lambda$ such that~$S:\Lambda\to\mathcal{K}(X)$ is continuous at every~$\lambda\in\Lambda^{\ast}$.
  Equivalently, for every~$\lambda\in\Lambda^{\ast}$ and every sequence~$\lambda_{m}\to\lambda$ in~$\Lambda$,~$h\big(S(\lambda_{m}),S(\lambda)\big)\to0$, where~$h$ is the Hausdorff distance on~$\mathcal{K}(X)$.
\end{corollary}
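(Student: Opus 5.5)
The plan is to apply Fort's theorem \cite{Fort1951} to the correspondence $S:\Lambda\to\mathcal{K}(X)$. Fort's theorem states the following. Let $Y$ be a Baire space and $Z$ a metric space. Let $\Phi:Y\to\mathcal{K}(Z)$ be upper semicontinuous with nonempty compact values. Then the set of points at which $\Phi$ is lower semicontinuous, and hence continuous, is a residual set in $Y$. We take $Y=\Lambda$ and $Z=X$.

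Theorem~\ref{thm:equilibrium-stability} supplies exactly what Fort's theorem needs. By item~\ref{it:stab-closed}, $(\Lambda,\rho)$ is a complete metric space, so it is a Baire space by the Baire category theorem. By item~\ref{it:stab-compact}, $S$ takes nonempty compact values on $\Lambda$. By item~\ref{it:stab-usc}, $S$ is upper semicontinuous there.

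Fort's theorem therefore gives a residual set $\Lambda^{\ast}\subset\Lambda$ of continuity points of $S$. Because $\Lambda$ is Baire, a residual subset is automatically dense. The reason is that its complement is a countable union of nowhere dense sets, which has empty interior.

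For the equivalent sequential formulation, we invoke Proposition~\ref{prop:sequential-set-valued}\ref{it:seq-cont}. It applies because $X$ is compact and $\Lambda$ is a metric space, and it says that continuity of $S$ at $\lambda$ is equivalent to $h(S(\lambda_{m}),S(\lambda))\to0$ for every sequence $\lambda_{m}\to\lambda$ in $\Lambda$.

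The only delicate point is that Fort's theorem is usually stated for correspondences that are upper semicontinuous everywhere on a Baire space, with compact values in a metric space. We must make sure it is applied on $\Lambda$ with its relative topology, not on all of $\mathcal{P}$, where $S$ may take empty values.

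If a self-contained argument is preferred, the standard proof runs as follows. Fix a countable base $\{O_{k}\}$ of the compact metric space $X$. For each $k$, let $A_{k}$ be the set of $\lambda\in\Lambda$ such that $S(\lambda)\cap O_{k}\neq\emptyset$ but $\lambda$ is not an interior point of $\{\eta\mid S(\eta)\cap O_{k}\neq\emptyset\}$. We then show that each $A_{k}$ is of first category, using upper semicontinuity and closed sets of the form $\{\eta\mid S(\eta)\cap\overline{O}\neq\emptyset\}$. The points of lower semicontinuity are then exactly $\Lambda\setminus\bigcup_{k}A_{k}$.

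I expect to simply cite Fort's theorem rather than reprove it. The main thing to check is the correct transfer of the hypotheses from Theorem~\ref{thm:equilibrium-stability}.
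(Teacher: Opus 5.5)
Your proposal is correct and matches the paper's proof: completeness of $(\Lambda,\rho)$ from Theorem~\ref{thm:equilibrium-stability} makes $\Lambda$ a Baire space, and Fort's theorem applied to the upper semicontinuous, nonempty compact-valued $S:\Lambda\to\mathcal{K}(X)$ gives a residual set of lower semicontinuity points, which is dense by Baire. Proposition~\ref{prop:sequential-set-valued}\ref{it:seq-cont} then gives the Hausdorff formulation, and your optional countable-base sketch of Fort's theorem is a sound addition that the paper replaces by citing \cite{Fort1951}.
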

\begin{proof}
  By Theorem~\ref{thm:equilibrium-stability},~$(\Lambda,\rho)$ is a complete metric space, hence a Baire space, and~$S:\Lambda\to\mathcal{K}(X)$ is upper semicontinuous with nonempty compact values.
  Fort's theorem \cite{Fort1951} implies that the set~$\Lambda^{\ast}$ of points at which~$S$ is lower semicontinuous is residual in~$\Lambda$.
  It is dense by the Baire category theorem.
  At every~$\lambda\in\Lambda^{\ast}$,~$S$ is both upper and lower semicontinuous, and Proposition~\ref{prop:sequential-set-valued}\ref{it:seq-cont} gives the Hausdorff formulation.
\end{proof}

\section{Single-leader--multi-follower problems}
\label{sec:single-leader-multi-follower}

A single-leader--multi-follower game models the hierarchical decision process between a single leader and a group of followers~\cite{orig26,orig23,orig19}.
The followers make their decision simultaneously without communication, knowing the decision made by the leader.
This means that the followers' decision problem is a noncooperative game parametrized by the leader's variable~\cite{AusselSvensson2018,AusselSvensson2020}.
In this section we prove existence results for single-leader--multi-follower problems where each follower has vector-valued payoffs, which makes our investigation of generalized multi-criteria games from Section~\ref{sec:stability} relevant as the lower-level setting.
We consider both the optimistic and pessimistic approaches of single-leader--multi-follower problems~\cite{LoridanMorgan1989,Dempe2002,orig12,LamparielloSagratellaStein2019}.

We apply the upper semicontinuity of the ideal Nash equilibrium correspondence from Theorem~\ref{thm:equilibrium-stability} to a single-leader--multi-follower problem.
In the optimistic formulation, the leader minimizes its objective over both its admissible decisions and the followers' equilibrium responses (see the hierarchical optimization framework in \cite{Dempe2002,LoridanMorgan1989}).
In the pessimistic formulation, the leader instead guards against the least favorable follower equilibrium.
We divide this section into two subsections where \S\ref{subsec:optimistic} treats the optimistic formulation and~\S\ref{subsec:pessimistic} the pessimistic one.
Both rest on the closed-graph and semicontinuity properties of Theorem~\ref{thm:equilibrium-stability}.

Let~$Z_{0}$ be a nonempty compact metric space denoting the leader decisions.
Let~$\lambda:Z_{0}\to\mathcal{P}$ be a continuous map associating a follower game with each leader decision, where~$\mathcal{P}$ is the parameter space defined in \eqref{eq:parameter-space}.
For~$z\in Z_{0}$, write
\begin{align}\label{eq:leader-follower-identification}
  u_{\nu}(x,z):=u_{\nu}^{\lambda(z)}(x)\eqsepv\qquad
  K_{\nu}(x_{-\nu},z):=K_{\nu}^{\lambda(z)}(x_{-\nu})\eqsepv
  \qquad(\nu\in N)\eqfinp
\end{align}
The followers' ideal Nash equilibrium set at~$z$ is~$S(\lambda(z))=\INE(\Gamma(\lambda(z)))$.
Finally, let~$\theta:Z_{0}\times X\to\R$ denote the leader's scalar objective function.

\subsection{Optimistic approach}
\label{subsec:optimistic}

In the (standard) optimistic bilevel formulation \cite{Dempe2002,LoridanMorgan1989}, the leader solves
\begin{align}\label{eq:slmfvr-program}
  \begin{aligned}
    \min_{z\in Z_{0},\,x\in X}\quad &\theta(z,x)\\
    \text{subject to}\quad &x\in S(\lambda(z))\eqfinv
  \end{aligned}
\end{align}
where, as in Definition~\ref{def:game}, the follower condition~$x\in S(\lambda(z))$ means that, for every~$\nu\in N$,
\begin{align}\label{eq:slmfvr-follower-conditions}
  \left\{\begin{aligned}
    &x_{\nu}\in K_{\nu}(x_{-\nu},z),\\
    &u_{\nu}(w_{\nu},x_{-\nu},z)
    \preceq_{C_{\nu}}u_{\nu}(x,z)
    \quad\text{for all }w_{\nu}\in K_{\nu}(x_{-\nu},z).
  \end{aligned}\right.
\end{align}

The existence result below follows the scheme of~\citet[Theorem~3.1]{AusselSvensson2018},~\cite[Theorem~3.3.4]{AusselSvensson2020} for single-leader--multi-follower games with scalar followers.
Here the followers have vector-valued payoffs and play ideal Nash equilibria, which depends on the leader's decision via~$\lambda$, with respect to closed (possibly nonconvex) cones.

\begin{theorem}
  \label{thm:optimistic-existence}
  Assume the hypotheses of Theorem~\ref{thm:equilibrium-stability}.
  Suppose that:
  \begin{enumerate}[label=\upshape(\roman*), widest=ii]
    \item\label{it:opt-lambda} $Z_{0}$ is a nonempty compact metric space and~$\lambda:Z_{0}\to\Lambda$ is continuous, where~$\Lambda$ is defined in \eqref{eq:ideal-equilibrium-set};
    \item\label{it:opt-theta} $\theta:Z_{0}\times X\to\R$ is lower semicontinuous.
  \end{enumerate}
  Then problem~\eqref{eq:slmfvr-program} has a solution.
\end{theorem}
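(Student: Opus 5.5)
The plan is a Weierstrass argument on the joint feasible set
\begin{align*}
  \mathcal{G}:=\big\{(z,x)\in Z_{0}\times X\mid x\in S(\lambda(z))\big\}\eqfinp
\end{align*}
Problem~\eqref{eq:slmfvr-program} is exactly the minimization of~$\theta$ over~$\mathcal{G}$. It therefore suffices to show three things: $\mathcal{G}$ is nonempty, $\mathcal{G}$ is compact in~$Z_{0}\times X$, and a lower semicontinuous function attains its minimum on a nonempty compact set.

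Nonemptiness is immediate from hypothesis~\ref{it:opt-lambda}. Since~$\lambda$ takes values in~$\Lambda$, we have $S(\lambda(z))\neq\emptyset$ for every~$z\in Z_{0}$, and $Z_{0}\neq\emptyset$.

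Compactness goes as follows. Because~$Z_{0}\times X$ is a compact metric space, it is enough to prove that~$\mathcal{G}$ is closed.
\begin{itemize}
  \item Take~$(z_{m},x^{m})\in\mathcal{G}$ with~$(z_{m},x^{m})\to(z,x)$.
  \item Continuity of~$\lambda$ gives~$\lambda(z_{m})\to\lambda(z)$ in~$\rho$.
  \item We also have~$x^{m}\in S(\lambda(z_{m}))$ and~$x^{m}\to x$.
  \item By the closedness of~$\gr S$ in~$\mathcal{P}\times X$ from Theorem~\ref{thm:equilibrium-stability}, it follows that~$x\in S(\lambda(z))$, that is, $(z,x)\in\mathcal{G}$.
\end{itemize}
This is precisely the step where the stability result enters. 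It is the only nontrivial point, and it is already settled by Theorem~\ref{thm:equilibrium-stability}, which requires only closed cones and no convexity. In particular we need the closed graph on all of~$\mathcal{P}$, or equivalently the upper semicontinuity of~$S$ on~$\Lambda$ together with Proposition~\ref{prop:sequential-set-valued}. We do not need lower semicontinuity of~$S$.

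Existence then follows by the standard argument. Let~$\alpha:=\inf_{\mathcal{G}}\theta$ and pick a minimizing sequence~$(z_{m},x^{m})\in\mathcal{G}$. By compactness we may extract a subsequence converging to some~$(\bar z,\bar x)\in\mathcal{G}$. Lower semicontinuity of~$\theta$ gives~$\theta(\bar z,\bar x)\le\liminf\theta(z_{m},x^{m})=\alpha$. In particular~$\alpha>-\infty$, and~$(\bar z,\bar x)$ solves~\eqref{eq:slmfvr-program}.
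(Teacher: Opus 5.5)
Your proposal is correct and follows the paper's proof: both show that $\gr(S\circ\lambda)$ is nonempty (because $\lambda$ takes values in $\Lambda$) and closed in the compact space $Z_{0}\times X$, hence compact, and then apply the Weierstrass argument to the lower semicontinuous $\theta$. The only cosmetic difference is that you get closedness directly from the closed graph of $S$ on $\mathcal{P}\times X$ together with continuity of $\lambda$, as the paper itself does in Remark~\ref{rem:slmfvr-minimal-feasibility}, whereas the paper's proof passes through upper semicontinuity of $S\circ\lambda$ and Proposition~\ref{prop:sequential-set-valued}.
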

\begin{proof}
  Since~$\lambda:Z_{0}\to\Lambda$ is continuous and~$S:\Lambda\to\mathcal{K}(X)$ is upper semicontinuous by Theorem~\ref{thm:equilibrium-stability}, the composite map~$S\circ\lambda:Z_{0}\to\mathcal{K}(X)$ is upper semicontinuous with nonempty compact values.
  By Proposition~\ref{prop:sequential-set-valued}, its graph
  \begin{align}\label{eq:slmfvr-feasible-graph}
    \mathcal{F}:=\gr(S\circ\lambda)=\bigl\{(z,x)\in Z_{0}\times X\mid x\in S(\lambda(z))\bigr\}
  \end{align}
  is closed in the compact space~$Z_{0}\times X$, hence compact, and it is nonempty because~$Z_{0}\neq\emptyset$ and the values of~$S\circ\lambda$ are nonempty.
  By the Weierstrass theorem, the lower semicontinuous function~$\theta$ attains its minimum on~$\mathcal{F}$ at some~$(z^{\ast},x^{\ast})$, which solves problem~\eqref{eq:slmfvr-program}.
\end{proof}

\begin{remark}
  \label{rem:optimistic-scope}
  The condition~$\lambda(Z_{0})\subset\Lambda$ ensures that~$S(\lambda(z))\ne\emptyset$ for all~$z\in Z_{0}$.
  It is not a consequence of compactness and continuity alone, as one could observe simply that maximizing~$(t,1-t)$ over~$t\in[0,1]$ with respect to the orthant~$\R_{+}^{2}$ has no ideal solution.
\end{remark}

\begin{remark}
  \label{rem:slmfvr-minimal-feasibility}
  The proof above also works if~$\lambda:Z_{0}\to\mathcal{P}$ is continuous (without requiring~$\lambda(Z_{0})\subset\Lambda$), provided that~$\lambda(z_{0})\in\Lambda$ for some~$z_{0}\in Z_{0}$, that is, some leader decision admits a follower equilibrium.
  Indeed,~$S\circ\lambda$ may then have empty values, but~$\mathcal{F}=\gr(S\circ\lambda)$ is still closed, because~$S$ has a closed graph (Theorem~\ref{thm:equilibrium-stability}) and~$\lambda$ is continuous.
  That is, any~$x_{0}\in S(\lambda(z_{0}))$ gives~$(z_{0},x_{0})\in\mathcal{F}$.
  This variant does not assert the existence of follower equilibria at every leader decision.
\end{remark}

\subsection{Pessimistic solutions}
\label{subsec:pessimistic}

Throughout this subsection we keep hypothesis~\ref{it:opt-lambda} of Theorem~\ref{thm:optimistic-existence} and strengthen~\ref{it:opt-theta} to continuity of~$\theta:Z_{0}\times X\to\R$.
For~$z\in Z_{0}$, the leader's pessimistic value at~$z$ is
\begin{align}\label{eq:pessimistic-value}
  \psi(z):=\sup_{x\in S(\lambda(z))}\theta(z,x)\eqfinp
\end{align}
The supremum is attained, i.e., it is a maximum, because~$S(\lambda(z))$ is nonempty and compact, since~$\lambda(z)\in\Lambda$ (Theorem~\ref{thm:equilibrium-stability}), and~$\theta$ is continuous.
The pessimistic problem is given as
\begin{align}\label{eq:pessimistic-program}
  \min_{z\in Z_{0}}\psi(z)\eqsepv
  \qquad
  v:=\inf_{z\in Z_{0}}\psi(z)\eqfinp
\end{align}
Note that here, the followers' responses are captured within the function~$\psi$ and do not appear explicitly in~\eqref{eq:pessimistic-program}.
Unlike the optimistic problem~\eqref{eq:slmfvr-program}, a solution of~\eqref{eq:pessimistic-program} is a one-sided strategy~$z^{\ast}\in Z_{0}$ of the leader.
Similar to the optimistic case, we call~$(z^{\ast},x^{\ast})\in Z_{0}\times X$ a \emph{paired solution} of~\eqref{eq:pessimistic-program} if~$x^{\ast}\in S(\lambda(z^{\ast}))$ and~$\theta(z^{\ast},x^{\ast})=\psi(z^{\ast})=v$.
When the supremum in~\eqref{eq:pessimistic-value} is attained for any~$z \in Z_{0}$, a decision~$z^{\ast}$ solves~\eqref{eq:pessimistic-program} if and only if~$(z^{\ast},x^{\ast})$ is a paired solution for some~$x^{\ast}$, and such~$x^{\ast}$ are exactly the maximizers in~\eqref{eq:pessimistic-value} at~$z^{\ast}$.

In the following theorem, we show that~$v$ is always attained on~$\gr(S\circ\lambda)$, and that, under lower semicontinuity of~$S$, the attaining pair is a paired solution.

\begin{theorem}
  \label{thm:pessimistic-existence}
  Assume the hypotheses of Theorem~\ref{thm:equilibrium-stability} and hypothesis~\ref{it:opt-lambda} of Theorem~\ref{thm:optimistic-existence}, and let~$\theta:Z_{0}\times X\to\R$ be continuous.
  Then
  \begin{enumerate}[label=\upshape(\alph*)]
    \item\label{it:pes-value} the pessimistic value~$v$ is finite and attained on~$\gr(S\circ\lambda)$, that is, there exist~$z^{\ast}\in Z_{0}$ and~$x^{\ast}\in S(\lambda(z^{\ast}))$ with~$\theta(z^{\ast},x^{\ast})=v$, where~$z^{\ast}$ is the limit of a minimizing sequence of~$\psi$ on~$Z_{0}$;
    \item\label{it:pes-strong} if~$z^{\ast}\in Z_{0}$ is the limit of a minimizing sequence of~$\psi$ on~$Z_{0}$ and~$S:\Lambda\to\mathcal{K}(X)$ is lower semicontinuous at~$\lambda(z^{\ast})$, then~$z^{\ast}$ solves problem~\eqref{eq:pessimistic-program}.
      If this~$z^{\ast}$ is the one in~\ref{it:pes-value}, then~$(z^{\ast},x^{\ast})$ there is a paired solution.
      In particular, this holds whenever~$\lambda(z^{\ast})\in\Lambda^{\ast}$, the dense residual set of Corollary~\ref{cor:generic-stability}.
  \end{enumerate}
\end{theorem}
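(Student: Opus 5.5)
Part (a) rests on two facts: $S\circ\lambda$ is upper semicontinuous with nonempty compact values, and $\theta$ is continuous on the compact space $Z_{0}\times X$. Finiteness of $v$ is immediate. Each $\psi(z)$ is a maximum of the continuous $\theta$ over the nonempty compact set $S(\lambda(z))$, so $|\psi|\le\max_{Z_{0}\times X}|\theta|<\infty$.

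Take a minimizing sequence $z_{m}\in Z_{0}$ with $\psi(z_{m})\to v$. For each $m$ choose a maximizer $x^{m}\in S(\lambda(z_{m}))$ with $\theta(z_{m},x^{m})=\psi(z_{m})$. By compactness of $Z_{0}\times X$, pass to a subsequence with $z_{m}\to z^{\ast}$ and $x^{m}\to x^{\ast}$. Continuity of $\lambda$ gives $\lambda(z_{m})\to\lambda(z^{\ast})$ in $\Lambda$. The closed graph of $S$ from Theorem~\ref{thm:equilibrium-stability} then yields $x^{\ast}\in S(\lambda(z^{\ast}))$. Continuity of $\theta$ gives
\[
\theta(z^{\ast},x^{\ast})=\lim_{m}\theta(z_{m},x^{m})=\lim_{m}\psi(z_{m})=v .
\]
This proves (a). It also shows $v\le\psi(z^{\ast})$ trivially, and $\theta(z^{\ast},x^{\ast})\le\psi(z^{\ast})$.

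For (b), the remaining inequality is $\psi(z^{\ast})\le v$, and this is exactly where lower semicontinuity enters. Upper semicontinuity only makes $\psi$ upper semicontinuous, which is the wrong direction for minimization.

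Fix any maximizer $\bar x\in S(\lambda(z^{\ast}))$, so $\theta(z^{\ast},\bar x)=\psi(z^{\ast})$. By lower semicontinuity of $S$ at $\lambda(z^{\ast})$ and the sequential criterion Proposition~\ref{prop:sequential-set-valued}\ref{it:seq-lsc} applied along $\lambda(z_{m})\to\lambda(z^{\ast})$, there exist $\bar x^{m}\in S(\lambda(z_{m}))$ with $\bar x^{m}\to\bar x$. Then $\theta(z_{m},\bar x^{m})\le\psi(z_{m})$, and passing to the limit gives
\[
\psi(z^{\ast})=\theta(z^{\ast},\bar x)\le\lim_{m}\psi(z_{m})=v .
\]
Hence $\psi(z^{\ast})=v$, so $z^{\ast}$ solves \eqref{eq:pessimistic-program}. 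If $z^{\ast}$ is the point constructed in (a), then $\theta(z^{\ast},x^{\ast})=v=\psi(z^{\ast})$, so $(z^{\ast},x^{\ast})$ is a paired solution. The final assertion follows from Corollary~\ref{cor:generic-stability}, since $S$ is lower semicontinuous at every point of $\Lambda^{\ast}$.

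One subtlety is that the minimizing sequence in (b) is the full sequence converging to $z^{\ast}$, or a subsequence of it, and the lsc criterion must be applied along it. Any subsequence of a minimizing sequence is still minimizing, so this causes no difficulty.
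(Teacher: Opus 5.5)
Your proof is correct and follows the paper's argument. For (a), you take maximizers $x^{m}\in S(\lambda(z_{m}))$ along a minimizing sequence, then use compactness and the closed graph of $S$; the paper phrases the same step as compactness of the graph of $S\circ\lambda$. For (b), you use the sequential lower-semicontinuity criterion to lift a maximizer at $z^{\ast}$ along the sequence, which gives $\psi(z^{\ast})\le v$; your treatment of the paired-solution clause and of $\Lambda^{\ast}$ also matches the paper.
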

\begin{proof}
  \ref{it:pes-value} As in the proof of Theorem~\ref{thm:optimistic-existence}, the graph~$\mathcal{F}=\gr(S\circ\lambda)$ in \eqref{eq:slmfvr-feasible-graph} is nonempty and compact, and~$v$ is finite because~$\min_{Z_{0}\times X}\theta\leq\psi\leq\max_{Z_{0}\times X}\theta$.
  Choose~$(z_{m},x^{m})\in\mathcal{F}$ with~$\theta(z_{m},x^{m})=\psi(z_{m})\to v$.
  By compactness of~$\mathcal{F}$, a subsequence converges to some~$(z^{\ast},x^{\ast})\in\mathcal{F}$, and continuity of~$\theta$ gives~$\theta(z^{\ast},x^{\ast})=v$.

  \ref{it:pes-strong} Let~$z_{m}\to z^{\ast}$ with~$\psi(z_{m})\to v$, and let~$\hat{x}\in S(\lambda(z^{\ast}))$ satisfy~$\theta(z^{\ast},\hat{x})=\psi(z^{\ast})$.
  Since~$\lambda$ is continuous and~$S$ is lower semicontinuous at~$\lambda(z^{\ast})$, Proposition~\ref{prop:sequential-set-valued} gives~$\hat{x}^{m}\in S(\lambda(z_{m}))$ with~$\hat{x}^{m}\to\hat{x}$, so that
  \begin{align*}
    v=\lim_{m\to\infty}\psi(z_{m})\geq\lim_{m\to\infty}\theta(z_{m},\hat{x}^{m})=\psi(z^{\ast})\geq v\eqfinp
  \end{align*}
  For the pair of~\ref{it:pes-value}, this gives~$\theta(z^{\ast},x^{\ast})=v=\psi(z^{\ast})$.
  At every~$\lambda\in\Lambda^{\ast}$, the correspondence~$S$ is continuous by Corollary~\ref{cor:generic-stability}, and in particular lower semicontinuous.
\end{proof}

\begin{remark}
  \label{rem:pessimistic-gap}
  The pair~$(z^{\ast},x^{\ast})$ in part~\ref{it:pes-value} need not be a paired solution, and~$z^{\ast}$ need not solve~\eqref{eq:pessimistic-program}.
  Indeed, we only have~$v=\theta(z^{\ast},x^{\ast})\leq\psi(z^{\ast})$ and the inequality may be strict.
  This is due to the fact that part~\ref{it:pes-value} uses only the closedness of~$\gr(S\circ\lambda)$.
  Lower semicontinuity of~$S$ at~$\lambda(z^{\ast})$ gives the missing bound~$\psi(z^{\ast})\leq v$, which closes this gap in part~\ref{it:pes-strong}.

  In fact, this is a known difficulty of pessimistic bilevel problems that they may fail to have a solution even under compactness and continuity assumptions, since the leader's pessimistic value function is in general only upper semicontinuous (see~\citet{LamparielloSagratellaStein2019}).
  Theorem~\ref{thm:pessimistic-existence} reflects this issue.
  For completeness, the following example illustrates this situation in the scalar setting. 
\end{remark}

\begin{example}
  \label{ex:pessimistic-gap}
  Take one follower with~$X_{1}=[0,1]$,~$d_{1}=1$,~$C_{1}=\R_{+}$, and~$K_{1}^{\lambda(z)}\equiv[0,1]$, and let~$Z_{0}=[0,1]$,~$u_{1}^{\lambda(z)}(x)=zx$, and~$\theta(z,x)=z-x$.
  The map~$\lambda$ is continuous, since~$\sup_{x\in[0,1]}|zx-z'x|=|z-z'|$.
  Moreover~$S(\lambda(z))=\{1\}$ for~$z>0$, and~$S(\lambda(0))=[0,1]$, so all hypotheses of Theorem~\ref{thm:pessimistic-existence} hold.
  Then~$\psi(z)=z-1$ for~$z>0$ and~$\psi(0)=\max_{x\in[0,1]}(-x)=0$, so~$v=-1$ is not attained by~$\psi$ and problem~\eqref{eq:pessimistic-program} has no solution.
  The value~$v=\theta(0,1)$ is attained on~$\gr(S\circ\lambda)$ as in Theorem~\ref{thm:pessimistic-existence}\ref{it:pes-value}, but~$(0,1)$ is not a paired solution since~$\psi(0)=0$, and~$S$ is not lower semicontinuous at~$\lambda(0)$.
\end{example}

%%%%%%%%%%%%%%%%%%%%%%%%%%%%%%%%%%%%%%%%%%%%%%%%%%%%%%%%%%%%%%%%%%%%%%%%%%%%%%%%%
\section{Conclusions}\label{sec:conclusions}

We have investigated the stability of ideal Nash equilibria in generalized multi-criteria games with vector-valued payoffs and strategy-dependent feasible sets.
The players' payoffs are compared through closed ordering cones, which need not be convex or pointed, preserving the vector nature of the equilibrium conditions without scalarizing the individual payoff functions (cf.~\cite{Luc1989}).
Under compactness of the strategy spaces and the continuity assumptions encoded by the parameter metric \eqref{eq:parameter-metric}, we proved that the ideal Nash equilibrium correspondence has a closed graph.
Consequently, Theorem~\ref{thm:equilibrium-stability} establishes its upper semicontinuity and compact-valuedness on the parameter subspace~$\Lambda$ of games admitting ideal Nash equilibria.
Since~$\Lambda$ is complete, Corollary~\ref{cor:generic-stability} shows that the correspondence is in fact continuous on a dense residual subset of~$\Lambda$.

For convex cones, ideal Nash equilibria are exactly the common equilibria of scalar games weighted by dual-cone vectors (Proposition~\ref{prop:scalar-reduction}), so part of the stability analysis can also be obtained from the scalar theory.
For closed nonconvex cones, no such reduction exists, and the direct arguments are needed.
Identifying the LU order on intervals with the~$\R_{+}^{2}$ order, we obtained the same closed-graph and upper semicontinuity results for interval-valued games, whose ideal Nash equilibria are exactly the common generalized Nash equilibria of the two scalar games with payoffs~$u_{\nu,1}$ and~$u_{\nu,2}$ (Corollary~\ref{cor:interval-stability}).

We then applied the stability result to an optimistic single-leader--multi-follower problem in which the followers' game~$\lambda(z)$ is determined by the leader's decision~$z$.
Assuming that the leader's decision space is compact, the parameter selection map is continuous with values in~$\Lambda$, and the leader's objective function is lower semicontinuous, Theorem~\ref{thm:optimistic-existence} guarantees a solution of the optimistic problem.
The argument uses compactness of the joint leader--follower feasible set and the Weierstrass minimum principle, as in the standard optimistic bilevel framework (cf.~\cite{Dempe2002,LoridanMorgan1989}).
For the pessimistic formulation with a continuous leader objective, Theorem~\ref{thm:pessimistic-existence} uses the same closed graph to show that the pessimistic value is attained on~$\gr(S\circ\lambda)$, and the corresponding leader decision solves the pessimistic problem when the equilibrium correspondence is lower semicontinuous at the limiting parameter, for instance on the dense residual set of Corollary~\ref{cor:generic-stability}.
Example~\ref{ex:pessimistic-gap} shows that this condition cannot be dropped.

\footnotesize

\section*{Acknowledgments}

The authors acknowledge the financial support provided by the Center of Excellence in Theoretical and Computational Science (TaCS-CoE), Faculty of Science, King Mongkut's University of Technology Thonburi (KMUTT), Bangkok, Thailand.
The first author appreciates the support provided by Petchra Pra Jom Klao Ph.D. Research Scholarship of KMUTT through grant no.~17/2564.

The authors also acknowledge the use of the large language model Claude Opus 5.5 (Anthropic) for the presentation, proof checking, grammatical corrections and reference recommendation.
None of the results were derived by the AI tools.
The human authors have checked and approved the final manuscript, and take full responsibility for its content.

\bibliography{References.bib}

% \bibliography{slmfgafe.bib}
% \bibliographystyle{abbrvnat}

\end{document}